\newtheorem{theorem}{Theorem}
\newtheorem*{theorem*}{Theorem}
\newtheorem{lemma}[theorem]{Lemma}
\newtheorem*{lemma*}{Lemma}
\newtheorem{proposition}[theorem]{Proposition}
\newtheorem{corollary}[theorem]{Corollary}
\newtheorem{cory}[theorem]{Corollary}
\theoremstyle{definition}
\newtheorem{definition}[theorem]{Definition}
\newtheorem{remark}[theorem]{Remark}
\newtheorem{remarks}[theorem]{Remarks}
\newcommand{\sta}{{{}^\ast}}
\newcommand{\al}{{\alpha}}
\newcommand{\be}{{\beta}}
\newcommand{\eps}{{\varepsilon}}
\newcommand{\de}{{\delta}}
\newcommand{\De}{{\Delta}}
\newcommand{\ga}{{\gamma}}
\newcommand{\Ga}{{\Gamma}}
\newcommand{\la}{{\lambda}}
\newcommand{\si}{{\sigma}}
\renewcommand{\phi}{{\varphi}}
\newcommand\map{\operatorname{Map}}
\newcommand\im{\operatorname{im}}
\newcommand{\Id}{\operatorname{Id}}
\renewcommand{\ge}{\geqslant}
\renewcommand{\le}{\leqslant}
\newcommand\Z{\mathbb Z}
\newcommand\C{\mathbb C}
\newcommand\N{\mathbb N}
\newcommand{\into}{\hookrightarrow}
\newcommand{\cal}[1]{{\mathcal #1}}
\newcommand{\ov}{\overline}
\newcommand{\case}[2][]{\iftoggle{no_cases}{\left\{\begin{array}{ll}#2 & #1}{\\#2 & #1}\togglefalse{no_cases}}
\newcommand{\esac}{\end{array}\right.\toggletrue{no_cases}}
\newcommand{\Poly}{\C[X]}
\newcommand{\rad}{\operatorname{rad}}
\renewcommand{\a}{\mathfrak{a}}
\newcommand{\fa}{\mathfrak{a}}
\newcommand{\fm}{\mathfrak{m}}
\newcommand{\rank}{{\operatorname{rank}}}
\newcommand{\tuple}[2]{(#1_{1},\ldots,#1_{#2})}
\newcommand{\drank}{d_{\operatorname{rank}}}
\newcommand{\maple}[2]{\operatorname{Map}_{\le}(#1,#2)}
\newcommand{\Mat}{\operatorname{Mat}}
\newcommand{\Np}{\mathbb N_+}
\newcommand{\Bij}{\operatorname{Bij}}
\begin{document}
\title[Almost commuting matrices...]{Almost commuting matrices with respect to the rank metric}
\subjclass[2010]{13A99, 15B57}   
\author{Gábor Elek}
\author{Łukasz Grabowski}
\address{Department of Mathematics And Statistics, Fylde College, Lancaster University, Lancaster, LA1 4YF, United Kingdom}

\email{g.elek@lancaster.ac.uk}  
\email{lukasz.grabowski@lancaster.ac.uk}

\thanks{G.E. was partially supported
by the ERC Consolidator Grant ``Asymptotic invariants of discrete groups,
sparse graphs and locally symmetric spaces'' No. 648017. Ł.G. was partially supported by the ERC Starting Grant ``Limits of Structures in Algebra and Combinatorics'' No. 805495}

\begin{abstract}
We show that if  $A_1,A_2,\ldots, A_n$ are square matrices, each of them is  either unitary or self-adjoint, and they almost commute with respect to the rank metric, then one can find commuting matrices $B_1$, $B_2$, $\ldots$, $B_n$ that are close to the matrices $A_i$ in the rank metric.
\end{abstract}
\maketitle

\setcounter{tocdepth}{1}
\tableofcontents

\section{Introduction}

Recently there has been a considerable amount of research devoted to the following family of questions: suppose that square matrices $A$ and $B$ fulfil some relation ``approximately''. Can we then perturb $A$ and $B$ so that the resulting matrices $A'$ and $B'$ actually fulfil the relation in question? Let us make it more precise by reviewing some historical and more recent examples.

We start with the most famous one. Paul Halmos \cite{Hal} posed the following problem, known since as the \emph{Halmos problem}: Let $\de >0$, and suppose that $A$ and $B$ are self-adjoint matrices of norm $1$. Can we find $\eps>0$ such that if the operator norm of $AB-BA$ is at most $\eps$ then there exist self-adjoint matrices $A'$ and $B'$ such that $A'B' = B'A'$ and such that the operator norms of $A'-A$ and of $B'-B$ are at most $\de$? 

 An affirmative answer to this question was given by Huaxin Lin~\cite{HL} (see also \cite{FR} 
and~\cite{Has}). On the other hand, Voiculescu  proved that for integers 
$d\ge 7$ there exist $d\times d$ unitary matrices $U_d$, $V_d$ such 
that
\begin{itemize}
\item $\| U_d V_d-V_d U_d\|=|1-e^{2\pi i/d}|$, and
\item for any pair $A_d,B_d$ of commuting $d\times d$  matrices we have 
$$
\|U_d-A_d\|+\|V_d-B_d\|\ge \sqrt{2-|1-e^{2\pi i/d}|}-1.
$$
\end{itemize}

In other words, in the original Halmos problem, if we replace the assumption that $A$ and $B$ are self-adjoint with the assumption that $A$ and $B$ are unitary, then the answer is negative, even if we do 
not demand that the nearby commuting matrices $A'$ and $B'$ should be 
unitary. Furthermore, counterexamples were found by Davidson~\cite{Dav} if we 
ask about three or more almost commuting self-adjoint matrices.

A similar question had previously been asked by Rosenthal \cite{Ros}, 
where the ``closeness'' and ``almost commutativity'' of the matrices 
were defined using the normalised Hilbert-Schmidt norm in place of the operator norm. Affirmative 
answers to this version of the Halmos Problem were given for arbitrarily  
large finite families of normal operators by various 
authors \cite{HW},\cite{FS},\cite{Gle}. 

More recently the analogous question was studied in~\cite{AP1} for 
permutations and the Hamming distance. Arzhantseva and Paunescu showed 
the following result, which was a direct motivation for the 
investigations presented in this article. For every $\de>0$ there 
exists $\eps>0$ such that if $A$ and $B$ are permutations such that the 
normalised Hamming distance between $AB$ and $BA$ is at most $\eps$ 
then we can find permutations $A'$ and $B'$ such that $A'B' = B'A'$ and 
the normalised Hamming distances between $A$ and $A'$, as well as $B$ and $B'$, 
are both bounded by $\de$. The corresponding result is true also 
for an arbitrary finite number of permutations.

In this paper we study the analogous question for the \emph{rank metric}. 
We refer to~\cite{AP2} and the references therein for the background and 
motivation for studying rank metric, and here we only state the 
definitions. The set of natural numbers is $\N := \{0,1,\ldots\}$ and 
we let $\Np := \{1,2\ldots\}$. For $d \in \Np$ let $\Mat(d)$ be the set 
of all $d\times d$ square matrices with complex coefficients. Finally, for $A 
\in \Mat(d)$ we let  $\rank(A) := \frac{\dim_\C (\im(A))}{d}$. This 
norm defines a metric on $\Mat(d)$ in a usual way, i.e.~$\drank(A,B) := 
\rank(A-B)$.

Our main aim in this note is to show the following theorem.

\begin{theorem}\label{tmain} 
For every $\eps > 0$ and $n\in \Np$ there exists $\de > 0$ such that 
for all $d\in \Np$ we have the following. If $A_1, 
A_2,\dots A_n \in \Mat(d)$ are matrices,  each of them is either unitary or self-adjoint, and for 
all $1\le i,j \le n$ we have $\rank(A_iA_j-A_jA_i)\le \delta$, then 
there exist commuting matrices $B_1, B_2, \dots, B_n$ such that for 
every $1\le i \le n$  we have $\rank(A_i-B_i)\le \eps$. 
\end{theorem}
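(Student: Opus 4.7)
The plan is to argue by contradiction via the metric ultraproduct of the matrix algebras with respect to the rank metric. Assume Theorem~\ref{tmain} fails for some $\eps > 0$ and $n \in \Np$. We obtain dimensions $d_k \to \infty$ and matrices $A_i^{(k)} \in \Mat(d_k)$, each self-adjoint or unitary (with the type depending only on $i$), satisfying $\rank([A_i^{(k)}, A_j^{(k)}]) \to 0$ yet admitting no commuting $n$-tuple $(B_i^{(k)})_i \in \Mat(d_k)^n$ at rank-distance at most $\eps$ from $(A_i^{(k)})_i$ for cofinally many $k$. Fix a non-principal ultrafilter $\omega$ on $\N$ and form
\[
\cal R \;:=\; \prod\nolimits_\omega \Mat(d_k),
\]
the quotient of $\prod_k \Mat(d_k)$ by the ideal of sequences of $\omega$-rank zero. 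Since multiplication is $1$-Lipschitz in $\drank$ and each $\Mat(d_k)$ is von Neumann $*$-regular, $\cal R$ is a rank-complete unital $*$-regular ring carrying a faithful normalised rank function $\rho$; the images $A_i \in \cal R$ of the sequences $(A_i^{(k)})$ pairwise commute and are self-adjoint or unitary in $\cal R$.

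The contradiction will follow from the lifting principle that every commuting tuple $(A_1,\ldots,A_n) \subset \cal R$ of self-adjoint or unitary elements is represented by a sequence of honestly commuting matrix tuples $(B_i^{(k)})_i \in \Mat(d_k)^n$. Indeed this yields $\lim_\omega \rank(A_i^{(k)} - B_i^{(k)}) = 0$, so for $\omega$-many $k$ we obtain a commuting tuple within rank-distance $\eps$ of $(A_i^{(k)})_i$, contradicting the standing hypothesis.

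To prove the lifting, consider the rank-closed commutative $*$-subalgebra $\ov{\cal A} \subset \cal R$ generated by $A_1, \ldots, A_n$. Structure theory of commutative rank-complete $*$-regular rings with faithful finite rank identifies $\ov{\cal A}$ with $L^0(X,\mu)$ for some probability space, with each $A_i$ realised as a measurable function $f_i \colon X \to \R$ or $S^1$. The lift is then built by lifting a generating sequence $\cal P_1 \subset \cal P_2 \subset \cdots$ of finite Borel partitions of $(X,\mu)$ to commuting orthogonal projection systems $(E_C^{(k)})_{C \in \cal P_m}$ in $\Mat(d_k)$ with $\sum_C E_C^{(k)} = I$ and $\tr(E_C^{(k)})/d_k \to \mu(C)$ along $\omega$. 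A diagonal selection $m = m(k) \to \infty$ produces a commutative subalgebra $\cal D_k \subset \Mat(d_k)$ whose ultraproduct contains $\ov{\cal A}$, and the preimages $B_i^{(k)} \in \cal D_k$ of $A_i$ are the desired commuting matrix representatives.

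The main obstacle is to perform this lift \emph{compatibly} with the given sequences $(A_i^{(k)})$, so that $\rank(A_i^{(k)} - B_i^{(k)}) \to 0$ along $\omega$. The key auxiliary input is a stability theorem for almost-commuting projections in the rank metric (the special case of Theorem~\ref{tmain} in which each $A_i$ has two-point spectrum), established by a direct analysis of the $*$-algebra generated by two almost-commuting projections and an inductive extension to finitely many. Combined with a polynomial approximation of the spectral projections of each $A_i^{(k)}$---whose pairwise commutators have rank polynomially controlled by those of the $A_i^{(k)}$ themselves---this stability result allows the lifted partition projections $E_C^{(k)}$ to be chosen in rank-proximity to joint spectral projections of the $A_i^{(k)}$. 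The resulting $B_i^{(k)}$ then differ from $A_i^{(k)}$ only on a subspace of asymptotically vanishing rank, completing the lift and producing the required contradiction.
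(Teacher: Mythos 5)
Your reduction to an ultraproduct lifting statement is correct as a reformulation: the theorem is equivalent to the assertion that every $n$-tuple of commuting self-adjoint/unitary elements of $\cal R = \prod_\omega \Mat(d_k)$ admits a representative sequence that is exactly commuting at every level. But the reformulation is essentially all that the proposal establishes; the lifting itself, which carries the full content of the theorem, is not proved. Two of the steps you sketch do not go through. First, the partition-based lift cannot reproduce $\ov{\cal A}$. The rank metric on the candidate model $L^0(X,\mu)$ is $\rho(f) = \mu(\{f\neq 0\})$, not an $L^p$ or convergence-in-measure metric, and in this metric \emph{simple functions are not dense}: approximating a function $f$ within rank $\eps$ by a function constant on the cells of a finite partition forces $f$ itself to be constant on a subset of measure $\ge 1-\eps$. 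Elements of the ultraproduct can easily fail this -- e.g.\ the diagonal matrices $\mathrm{diag}(1/d,2/d,\dots,1)$ define a self-adjoint element of $\cal R$ with no atoms in its spectral measure, hence at rank-distance $\ge 1/2$ from every simple function. Consequently the rank-ultraproduct of the commutative algebras $\cal D_k$ generated by your lifted partitions will in general \emph{not} contain the given $A_i$, and the ``preimages $B_i^{(k)}\in\cal D_k$'' do not exist. (If you instead take $\cal D_k$ to be a full MASA rather than the finite partition algebra, the question of whether the $A_i^{(k)}$ can all be pushed into a single MASA with small rank error is exactly the original theorem, so nothing has been gained.)

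Second, the auxiliary ingredient you invoke -- stability of almost-commuting projection tuples in the rank metric -- is itself a special case of the theorem and is asserted rather than proved. For two projections a direct computation works, but the ``inductive extension to finitely many'' is not routine: if $P_1,\dots,P_n$ pairwise almost commute, the subspace $W=\bigcap_{i<j}\ker([P_i,P_j])$ has small codimension but is in general not invariant under any $P_i$, so one cannot simply restrict to it and extend by zero. Producing a large \emph{jointly almost invariant} subspace is precisely the crux of the problem, and it is exactly what the paper spends most of Section~\ref{sec-proof} doing (Lemmas~\ref{lemma-reg}, \ref{lemma-bootstrap}, \ref{lemma-complicated}, \ref{lemma-final}), via ballspaces, the effective Nullstellensatz, Macaulay's bound, and an Ornstein--Weiss-type orthogonality argument. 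Your proposal therefore defers the central difficulty to an unproven lemma and, independently, rests on a density claim about simple functions that is false in the rank metric. By contrast the paper's argument is direct, effective, and makes no use of ultraproducts or measure-algebra structure theory.
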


A more general statement will be presented in Theorem~\ref{t1}. 

\begin{remark}
It is natural to ask whether the matrices $B_1,\ldots, B_n$ can be 
taken to be ``of the same type'' as the matrices $A_1,\ldots,A_n$, 
e.g.~whether we can demand, say, the matrix $B_1$ to be unitary, provided that $A_1$ 
is unitary. We do not know the answer to this question.

We think that Theorem~\ref{tmain} likely stays true when $A_1,\ldots, 
A_n$ are allowed to be arbitrary normal invertible matrices. On the other hand, it 
would be interesting to find a counterexample when $A_1,\ldots, A_n$ 
are allowed to be arbitrary invertible matrices. 

\end{remark}

Becker, Lubotzky and Thom~\cite{blt} generalised the 
results from~\cite{AP1} to the context of finitely presented polycyclic 
groups, and showed that there are signi{-}ficant obstacles to generalise it further. We are able to prove some analogous results in the context of the rank metric. Let us make it precise now.

Let $\Ga$ be a finitely presented group with presentation 
$$
\langle \ga_1,\ldots, \ga_g | P_1(\ga_1,\ldots, \ga_g), \ldots, P_r(\ga_1,\ldots, \ga_g)\rangle,
$$
where $P_i$ are non-commutative monomials in $g$ variables (we allow negative exponents here).
. 

For a $k\times k$ matrix $B$ we denote with $\widehat B$ the operator on the vector space 
$\C^{\oplus\N}$ which acts as $B$ on the first $k$ basis vectors and is $0$ 
otherwise.

We will say that $\Ga$ is \emph{stable with respect to the rank metric} 
if for every $\de>0$ there exists $\eps>0$ such that the following holds. For all $d\in \N$ we have that if $A_1,\ldots, A_g$ are unitary $d\times d$ 
matrices with $\rank(P_i(A_1,\ldots, A_g)-\Id_d) \le \de$, then there exist $k\in \N$ and invertible $k\times k$  matrices 
$B_1,\ldots, B_g$ with $\dim\left(\im(\widehat{A_i}-\widehat{B_i})\right) \le \eps\cdot d$ and such that $P_i(B_1,\ldots, B_g) = \Id_k$ for all $i=1,\ldots, r$.

\begin{remarks}
\begin{enumerate}

\item Originally, we  have not worked with $\widehat{A_i}$ but rather with $A_i$ in the definition above. We thank Narutaka Ozawa for pointing out that it is more natural to take $\widehat{A_i}$. 

\item It is not hard to check (and we use it implicitly in the discussion above) that the property of being stable with respect to the rank metric does not depend on the choice of a finite presentation of the group $\Ga$.

\item Theorem~\ref{tmain} implies that the groups $\Z^k$, where $k=1,2,\ldots$, are stable with respect to the rank metric. We remark that there exist other natural notions of \emph{being stable with respect to the rank metric}: for example, we could demand the matrices $B_i$ to be unitary, or we could remove the assumption that the matrices $A_i$ are unitary. Thus, to avoid confusion, it might be useful to talk about, say, $(\cal A,\cal B)$-stability, where $\cal A =((\cal A_1,d_1),(\cal A_2,d_2),\ldots) $ is a sequence of monoids with metrics, and $\cal B = (\cal B_1,\cal B_2,\ldots)$ is a sequence of groups such that $\cal B_i \subset \cal A_i$. We refrain from doing this in this paper as all our results are about the stability with respect to the rank metric, as defined above.

\end{enumerate}
\end{remarks}

Perhaps the most interesting question which we cannot tackle at present is inspired by the results of~\cite{blt}: are polycyclic groups stable with respect to the rank metric? However, by using some of the ideas from~\cite{blt} we can show the following result.

Let $p$ be a prime number. Recall that Abels' group $A_p$ (see~\cite{Abels}) is the group of $4$-by-$4$ matrices of the form
$$
\begin{pmatrix}  
1 & \ast & \ast & \ast  \\
 & p^m & \ast  & \ast  \\
 & & p^n & \ast \\
& & & 1 
 \end{pmatrix},
$$
where $m,n\in \Z$, and where the stars are arbitrary elements of the ring $\Z[\frac{1}{p}]$ of rational numbers which can be written with a power of $p$ as the denominator. 

\begin{theorem}\label{abels}
For any prime number $p$ the Abels' group $A_p$ is not stable with respect to the rank metric.
\end{theorem}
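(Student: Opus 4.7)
The plan is to adapt the strategy of Becker--Lubotzky--Thom~\cite{blt}, transplanted from sofic/Hamming stability to the rank-metric setting. The obstruction is driven by the fact that $A_p$ is finitely presented while its centre is isomorphic to the non-finitely-generated, $p$-divisible group $\Z[1/p]$, with the $p$-divisibility of the central generator $z := I + e_{14}$ witnessed explicitly by short words in the generators. Concretely, setting $x := I + e_{13}$, $y := I + e_{34}$, $d := \mathrm{diag}(1,1,p,1)$, one verifies in $A_p$ that $[x,y]=z$ and $(dxd^{-1})^p = x$, and hence, by iteration, $z = [d^n x d^{-n}, y]^{p^n}$ for every $n \in \Np$.

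For an honest representation $\rho\colon A_p \to \mathrm{GL}_k(\C)$ satisfying the defining relations exactly, write $X := \rho(x)$ and $W := \rho(d)$. Since $WXW^{-1}$ is similar to $X$ and $(WXW^{-1})^p = X$, the multiset $\mathrm{spec}(X)$ equals $\{\beta^p : \beta \in \mathrm{spec}(X)\}$. Hence $\beta \mapsto \beta^p$ is a multiplicity-preserving permutation of $\mathrm{spec}(X)$, every eigenvalue of $X$ lies on a finite orbit, and $X$ has finite order coprime to $p$; the same holds for $\rho(y)$. Using the additional commutator relation $[X,\rho(y)] = \rho(z)$ and the $p$-divisibility relations for $z$, one derives that the normalised trace $\tfrac{1}{k}\tr \rho(z)$ is forced to lie in a fixed countable subset $S \subset \C$ independent of $k$.

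For the bad approximate representation, the building block is, for each large $d$,
$$
X^{(d)} := \mathrm{diag}\bigl(\lambda, \lambda^p, \ldots, \lambda^{p^{d-1}}\bigr), \qquad W^{(d)} := \text{the cyclic shift on }\C^d,
$$
for a suitably chosen $\lambda \in S^1$. A direct computation gives $\rank\bigl((W^{(d)} X^{(d)} (W^{(d)})^{-1})^p - X^{(d)}\bigr) = 1 = o(d)$, so the $p$-divisibility relation holds up to rank $o(1)$. The remaining generators of $A_p$ are approximated using the standard $4$-dimensional matrix realisation $A_p \subset \mathrm{GL}_4(\Z[1/p])$, reduced modulo an auxiliary prime $q \ne p$ and combined with block-summing and low-rank corrections, so that every defining relation of $A_p$ is satisfied up to rank $o(1)$ while the normalised trace of the approximate image $C^{(d)}$ of $z$ tends to some $\tau \notin S$. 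If $A_p$ were rank-stable, then for small $\eps$ and large $d$ there would exist matrices $B_i^{(d)}$ with $\dim\im(\widehat{A_i^{(d)}} - \widehat{B_i^{(d)}}) \le \eps d$ exactly satisfying the relations of $A_p$. The rank bound forces $|\tfrac{1}{d}\tr \rho_d(z) - \tfrac{1}{d}\tr C^{(d)}| \le 2\eps\ell$ where $\ell$ is the word length of $z$ in the generators, so in the limit $\tfrac{1}{d}\tr \rho_d(z)$ would have to lie both within $2\eps\ell$ of $\tau$ and in $S$, contradicting the choice of $\eps$ once $\tau$ is taken at positive distance from $S$.

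The hardest part is the construction of the bad approximate representation of the full $A_p$: all finitely many defining relations must hold to rank $o(1)$ simultaneously, while the central trace is steered to a value outside $S$. This is where the Becker--Lubotzky--Thom construction needs the most delicate adaptation to the rank-metric setting, requiring careful interplay between the nilpotent structure of the upper-triangular part of $A_p$, the action of the diagonal generators, and the low-rank corrections accumulating across the different levels of the $p$-divisibility filtration. Once such an approximate representation is in place, the contradiction above is standard.
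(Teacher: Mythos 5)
Your proposal takes a genuinely different route from the paper, but it has two real gaps, one of which is specific to the rank metric and is, in fact, the reason the paper does not argue via traces.

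The paper's proof works as follows. Let $H \cong \Z$ be the integer lattice inside the centre $Z(A_p)\cong \Z[1/p]$, and $\Delta := A_p/H$. Because $\Delta$ is amenable, a F{\o}lner sequence $F_i$ for $\Delta$ gives permutation matrices on $\C^{F_i}$ that satisfy the defining relations of $A_p$ up to vanishing rank error, with no explicit construction needed. If $A_p$ were rank-stable, these would be close to honest representations $\rho_i$ of $A_p$. One then restricts $\rho_i$ to the eigenvalue-$1$ eigenspace of the central element $\rho_i(t)$, $t$ a generator of $H$, which yields a linear representation of $\Delta$ that for large $i$ is faithful on $Z(A_p)/H$. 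But $\Delta$ is a finitely generated linear group, hence residually finite by Malcev's theorem, whereas $Z(A_p)/H\cong\Z[1/p]/\Z$ is not residually finite. Contradiction. Crucially, no quantitative invariant such as a normalised trace appears anywhere.

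The first gap in your argument is the step ``the rank bound forces $|\tfrac1d\tr\rho_d(z)-\tfrac1d\tr C^{(d)}|\le 2\eps\ell$.'' This is false for the rank metric: a rank-one perturbation of an unbounded matrix can change its trace by an arbitrary amount, and the matrices $B_i$ in the definition of rank-stability are completely arbitrary (not unitary, not even bounded), so there is no a priori operator-norm control on $\rho_d(z)$. The normalised-trace argument from the sofic/permutation setting relies essentially on permutation matrices being unitary; it does not transfer. The second gap is the claim that the exact relation $(WXW^{-1})^p=X$ forces $X$ to have finite order coprime to $p$. Similarity only tells you that $\beta\mapsto\beta^p$ permutes the multiset of eigenvalues; it says nothing about Jordan structure. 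In fact $x$ itself is unipotent of infinite order, and a faithful unipotent $\rho(x)$ satisfies all the relations while having infinite order and eigenvalue $1$ throughout, so the purported countable constraint set $S$ for $\tfrac1k\tr\rho(z)$ does not emerge from this line of reasoning. Finally, the construction of the ``bad approximate representation'' with controlled central trace is left as the hard part; this is exactly the burden the paper sidesteps by using F{\o}lner sets for the quotient $A_p/H$.

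If you want to salvage a trace-type argument you would need to either add a boundedness constraint on the $B_i$ to the definition of stability (changing the theorem), or find a way to replace the $B_i$ by bounded matrices at small rank cost, which is itself nontrivial. The residual-finiteness argument in the paper is robust precisely because it never needs any metric control on the exact representation beyond the rank-closeness hypothesis.
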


\begin{remarks}\begin{enumerate} \item It is not difficult to show that if a finitely presented amenable group is stable with respect to the rank metric then it is residually linear. Thus, mimicking the question posed in~\cite{AP1}, one could ask whether every finitely presented linear amenable group is stable with respect to the rank metric. Since Abels' group is a solvable group of step 3 which is finitely presented and linear, Theorem~\ref{abels} gives a negative answer to this question.

\item Our proof of Theorem~\ref{abels} is based on an argument from~\cite{blt} used to show that the Abels' groups are not stable with respect to the Hamming distance. In fact, Theorem~\ref{abels} is a generalisation of that particular result from~\cite{blt}. We will present the proof of~Theorem~\ref{abels} in Section~\ref{sec-abels}. It is very self-contained and can also serve a minor role as an alternative exposition of one of the results of~\cite{blt} (the advantage of our proof of Theorem~\ref{abels} compared with the exposition in~\cite{blt} is somewhat smaller definitional overheads).
\end{enumerate}
\end{remarks}
\newcommand{\inter}{\operatorname{int}}

We would like to thank the referees for a very careful reading of this paper, and numerous helpful suggestions and corrections.

\section{The strategy of the proof and the general statement of Theorem~\ref{tmain}}

Let us very informally discuss the strategy of the proof of Theorem~\ref{tmain}. For 
simplicity let us assume that we are given two $d\times d$ matrices $A$ 
and $B$ which are almost commuting with respect to the rank metric.

First, we need to find a large subspace $W\subset \C^d$ and a 
decomposition $W = \bigoplus_{i=1}^N B_i$, such that each space $B_i$ 
has the following two properties:
\begin{enumerate}
\item there exists $R_i\in \N$, an ideal $\fa_i\subset \C[X,Y]$, and a 
linear embedding $\phi_i \colon B_i \to \C[X,Y]/\fa_i$ whose image 
consists of all elements of degree at most $R_i$, and

\item ``$B_i$ is almost invariant for the actions of $A$ and $B$''.
\end{enumerate}

Most of Section~\ref{sec-proof} is devoted to finding such $W$, culminating in 
Lemma~\ref{lemma-final}. This allows us to replace the original $A$ 
and $B$ with direct sums of multiplication operators in commutative 
algebras, restricted to ``balls in the algebras'', i.e.~to subspaces of 
polynomials with degree bounded by $R_i$.

The reduction of the proof of Theorem~\ref{t1} to 
finding such $W$ is described in Lemma~\ref{lemma-first-red}.

The property that $A$ and $B$ are either self-adjoint or unitary is 
used in two ways. The first use is controlling the nilpotent elements 
in the resulting commutative algebras. This is done in 
Lemma~\ref{lemma-reg}. While controlling the nilpotent elements greatly 
simplifies the proof, the authors believe it is not essential.

The second, more crucial, use is making sure that the subspace $W$ is 
large. Informally speaking, the assumption that $A$ and $B$ are either 
self-adjoint or unitary allows us to argue that if $W$ is small, then 
we can add some extra subspaces $B_i$ in the orthogonal complement of 
$W$ (see Lemma~\ref{lemma-bootstrap}). The argument is very similar to 
the ``Ornstein-Weiss trick'' (see \cite{OW}), and the assumption on $A$ 
and $B$ allows us to replace ``disjointedness'' with ``orthogonality''.

After finding $W$ we still need to consider the operators of 
multiplication by $X$ and $Y$ in $\C[X,Y]/\fa_i$ restricted to polynomials 
of degree  bounded by $R_i$. These two restrictions clearly almost commute, and we 
need to perturb them with  small rank operators to obtain commuting 
operators. 

In order to be able to carry out the Ornstein-Weiss trick in our 
setting, we make use of the effective Nullstellensatz (encapsulated in Theorem~\ref{cory-effective})  and the Macaulay 
theorem on growth in graded algebras (encapsulated in Corollary~\ref{corymac}). The final commutative algebra tool which we use  is the standard Nullstellensatz (Proposition~\ref{alghom}).

The effective Nullstellensatz (i) allows us to argue that the embeddings 
$\phi_i$ exist, i.e.~reduce ``the local situation to the commutative 
algebra'', and (ii) together with the assumption that $A$ and $B$ are 
either unitary or self-adjoint, it allows us to control the nilpotent 
elements in the resulting commutative algebras $\C[X,Y]/\fa_i$. It is 
used in Lemma~\ref{lemma-reg}.

The Macaulay theorem (i) allows us to argue that the complement of $W$ 
is small, and (ii) it allows us to argue that the commuting 
perturbations of multiplication operators in commutative algebras which 
we find, are indeed small rank perturbations. It is used in 
Lemmas~\ref{lemma-complicated} and~\ref{lemma-bootstrap}.

\subsection*{Definitions and the general statement}

Elements of $\Mat(d)$ will be called \emph{$d$-matrices}. Tuples 
of $d$-matrices will be called \emph{$d$-matrix tuples}, and will be 
denoted with curly letters, e.g.~$\cal A = (A_1,\ldots, A_n)$ and $\cal 
M = (M_1,\ldots, M_n)$.

For $a\in \Np$, the symbol $[a]$ denotes the set $\{1,2,\dots,a\}$, and 
we let $[0]$ denote the empty set. We say that a matrix tuple 
$\cal A = (A_1,\ldots, A_n)$ is  \emph{commuting} if for all $i, j \in 
[n]$  we have $A_{i}A_{j} - A_{j}A_{i} = 0$. More generally, for $\eps 
\ge 0$ we say that $\cal A$ is $\eps$-commuting if 
$$
    \max_{i,j\in [n]} \rank(A_{i}A_{j} - A_{j}A_{i}) \le  \eps.
$$
If $d\in\Np$ and  $\cal A = (A_1,\ldots, A_n)$, $\cal B = (B_1,\ldots, B_n)$ are two $d$-matrix tuples, then we let
$$
    d_{\rank} (\cal A, \cal B) := \max_{i\in [n]} \rank(A_i-B_i).
$$

Given a matrix $A$, we denote the adjoint of $A$ by $A^\ast$. We say 
that a $d$-matrix tuple $\cal M=(M_1,\dots, M_n)$ is 
\emph{$\sta$-closed} if for every $i\in [n]$ there exists $j\in [n]$ 
such that $M_i^\ast=M_j$.

Our general result is as follows.

\begin{theorem}\label{t1}
For every $\eps\ge 0$ and $n\in\Np$ there exists $\de \ge 0$ such that 
if 
$$
\cal A=(A_1,\ldots, A_n)
$$ 
is a $\sta$-closed $\de$-commuting matrix 
tuple then we can find a commuting matrix tuple $\cal B$ with 
$$
    d_\rank(\cal A, \cal B) \le \eps.
$$
\end{theorem}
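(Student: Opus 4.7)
The plan is to reduce Theorem~\ref{t1} to a local and commutative-algebraic problem by decomposing, up to a small error in rank, the ambient space $\C^d$ into nearly invariant ``blocks'' modelled on truncations of polynomial quotient rings. Concretely I would aim to produce a subspace $W \subseteq \C^d$ of codimension at most $\eps d$ together with an orthogonal decomposition $W = \bigoplus_{i=1}^N B_i$, where each $B_i$ is almost invariant under every $A_j$ and admits a linear identification with the subspace of polynomials of degree $\le R_i$ inside a quotient algebra $\C[X_1,\ldots,X_n]/\fa_i$, in such a way that each $A_j$ restricted to $B_i$ is intertwined with the multiplication-by-$X_j$ operator on the truncated algebra. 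Once such a $W$ is at hand, the proof reduces to (a) perturbing, in each block separately, the tuple of truncated multiplication operators to a genuinely commuting tuple by a small-rank modification, and (b) choosing any commuting tuple on $W^\perp$, which costs at most $\eps$ in rank. This reduction is encapsulated in Lemma~\ref{lemma-first-red}.

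To build an individual block $B_v$ I would start with a vector $v$, form the span of the vectors $A^\alpha v$ over multi-indices $\alpha$ of bounded size, and read off the approximate relations satisfied by these vectors. Because $\cal A$ is $\de$-commuting, the order of the $A_j$ in a product matters only up to a low-rank correction, so the relations among these vectors define, after a suitable correction, an ideal $\fa_v \subseteq \C[X_1,\ldots,X_n]$. The effective Nullstellensatz (Theorem~\ref{cory-effective}) is what upgrades this approximate module structure to a genuine quotient algebra with controlled complexity, and the Macaulay bound (Corollary~\ref{corymac}) controls the Hilbert function of $\C[X]/\fa_v$ so that $B_v$ has manageable dimension. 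Successive blocks are produced by choosing the next generator in the orthogonal complement of what has been built so far, culminating in Lemma~\ref{lemma-final}.

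The heart of the argument is iterating this construction until the uncovered complement $W^\perp$ has rank at most $\eps d$, and this is exactly where $\sta$-closure becomes indispensable. If $\cal A$ were an arbitrary $\de$-commuting tuple, then although $W$ might be almost invariant under $\cal A$ its orthogonal complement $W^\perp$ would not inherit that invariance, and the construction would stall. Because $\cal A$ is $\sta$-closed, however, almost invariance of $W$ under each $A_j$ implies almost invariance of $W^\perp$ under $A_j^\ast$, and by $\sta$-closure each $A_j^\ast$ equals some $A_k$ in the tuple; hence $W^\perp$ is itself almost invariant under $\cal A$ and the construction can be iterated inside $W^\perp$. I expect this inductive ``Ornstein--Weiss with orthogonality in place of disjointedness'' step (Lemma~\ref{lemma-bootstrap}) to be the main obstacle, since one has to quantitatively balance the growth of errors under long products of the $A_j$, the degree $R_i$ allowed in each block, and the size of the bootstrap increment, all while staying within the Macaulay regime.

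The remaining piece is the perturbation step within a single block. Using the standard Nullstellensatz (Proposition~\ref{alghom}) one obtains enough algebra homomorphisms out of $\C[X]/\fa_i$ to construct a tuple of genuinely commuting operators on $(\C[X]/\fa_i)_{\le R_i}$ that agrees with truncated multiplication except at the top-degree ``boundary'', which is negligible in rank by the Macaulay bound. A subtler issue that must be settled beforehand is that nilpotent elements in $\C[X]/\fa_i$ could render the approximate module structure on $B_i$ an artefact rather than a genuine algebra homomorphism; controlling them uses $\sta$-closure a second time together with the effective Nullstellensatz (Lemma~\ref{lemma-reg}). Putting the perturbed blocks together on $W$ and extending arbitrarily on $W^\perp$ then yields a commuting tuple $\cal B$ with $d_\rank(\cal A,\cal B) \le \eps$.
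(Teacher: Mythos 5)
Your proposal matches the paper's proof step by step, correctly identifying the roles of Lemmas~\ref{lemma-first-red}, \ref{lemma-reg}, \ref{lemma-bootstrap}, \ref{lemma-final}, the effective Nullstellensatz (Theorem~\ref{cory-effective}), the Macaulay bound (Corollary~\ref{corymac}), and Proposition~\ref{alghom}. The only device you gloss over, with the phrase ``after a suitable correction'', is Lemma~\ref{lemma-pair-reduction}, by which the paper first passes to a large subspace on which the tuple is \emph{exactly} $R$-commutative, so that the relations at a root used to define the ideal $\fa_v$ hold on the nose rather than merely approximately.
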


Let us argue how to deduce Theorem~\ref{tmain} from Theorem~\ref{t1}. 
First, we note that if we replace the expression \emph{a $\sta$-closed 
$\de$-commuting matrix tuple} in the statement of Theorem~\ref{t1} by \emph{a $\de$-commuting 
matrix tuple such that each of the matrices $A_1,\ldots A_n$ is either 
self-adjoint or unitary} then we obtain the statement of Theorem~\ref{tmain}.

 But if  
$(A_1,\ldots, A_n)$ is any matrix tuple, then $(A_1,\ldots, A_n, 
A_1^\ast,\ldots, A_n^\ast)$ is a $\sta$-closed matrix tuple. As such, in 
order to deduce Theorem~\ref{tmain} from Theorem~\ref{t1} it is enough 
to prove the following proposition.

\begin{proposition}
For every $n\in \Np$, every $\de>0$ and every $d\in \Np$, we have that if
 $(A_1,\ldots, A_n)$ is a $\de$-commuting  $d$-matrix tuple and each of the matrices $A_1,\ldots, A_n$ is either unitary or self-adjoint, then the $d$-matrix tuple $(A_1,\ldots, A_n,A_1^\ast,\ldots, A_n^\ast)$ is $\de$-commuting as well. 
\end{proposition}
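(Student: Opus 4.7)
The plan is to verify that every commutator appearing in the extended tuple $(A_1,\ldots,A_n,A_1^\ast,\ldots,A_n^\ast)$ has rank at most $\de$. There are essentially four types of pairs to check: $(A_i,A_j)$, $(A_i,A_j^\ast)$, $(A_i^\ast,A_j)$, and $(A_i^\ast,A_j^\ast)$. The first type is given by hypothesis, and the ``diagonal'' commutators $[A_i,A_i^\ast]$ vanish identically because both self-adjoint and unitary matrices are normal. So the work is to reduce the three remaining types to the original hypothesis by replacing stars with either identities or inverses.

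The key observation is a rank-preservation identity: for any invertible matrix $M$ and any matrix $N$,
\[
[M^{-1},N] = M^{-1}N - NM^{-1} = M^{-1}(NM-MN)M^{-1} = -M^{-1}[M,N]M^{-1}.
\]
Since conjugation by invertible matrices preserves rank, this gives $\rank([M^{-1},N]) = \rank([M,N])$, and applying it twice yields $\rank([M^{-1},N^{-1}]) = \rank([M,N])$ whenever both $M$ and $N$ are invertible.

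With this tool, the proof is a short case analysis. For each $k$ we have either $A_k^\ast = A_k$ (if $A_k$ is self-adjoint) or $A_k^\ast = A_k^{-1}$ (if $A_k$ is unitary). For each pair $(i,j)$, substituting $A_i^\ast$ and $A_j^\ast$ accordingly converts the commutators $[A_i^\ast,A_j]$, $[A_i,A_j^\ast]$, and $[A_i^\ast,A_j^\ast]$ into one of $[A_i,A_j]$, $[A_i^{-1},A_j]$, $[A_i,A_j^{-1}]$, or $[A_i^{-1},A_j^{-1}]$; in all cases the rank equals $\rank([A_i,A_j]) \le \de$ by the identity above.

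There is no genuine obstacle here: the content is one linear-algebra identity plus a finite case-check, so the proposition follows at once and in fact holds with the same $\de$ on both sides, rather than with any worsening of the constant.
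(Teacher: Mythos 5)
Your proof is correct and is essentially the same argument as the paper's, just packaged more slickly. The paper reduces (by repeatedly replacing one star at a time) to showing that for $B$ unitary and any $A$, $\rank([A,B^{-1}]) \le \rank([A,B])$, and proves this by checking that $B(\ker[A,B]) \subseteq \ker[A,B^{-1}]$; your identity $[M^{-1},N] = -M^{-1}[M,N]M^{-1}$ is exactly the algebraic content of that inclusion (indeed it gives the stronger statement that the ranks are \emph{equal}), and your four-case analysis replaces the paper's inductive one-star-at-a-time reduction. No gap.
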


\begin{proof}
Using induction, it is enough to show that if $A$ is a $d$-matrix and $B$ is either a unitary or a self-adjoint $d$-matrix with $\rank(A,B) \le \de$ then also $\rank(A,B^\ast)\le \de$.

If $B$ is self-adjoint then there is nothing to prove. If $B$ is unitary then we will use the fact that $B^\ast = B^{-1}$.
 We let 
$$
W:= \ker (AB-BA),
$$
and by assumption we have $\dim(W) \ge 1-\de$. For  $v\in B(W)$ we can write $v= B(w)$ for some $w\in W$, hence we obtain that 
$$
B^{-1}A(v) = B^{-1}AB(w)= B^{-1}BA(w)= A(w).
$$ 
On the other hand we can write
$$
AB^{-1}(v) = AB^{-1}B(w) = A(w).
$$
This shows that $B(W)\subset \ker(AB^{-1}-B^{-1}A)$, finishing the proof because 
$$
\dim(B(W)) = \dim(W)\ge 1-\de.
$$
\end{proof}

\begin{remark} With a little bit more effort we could also deal with matrix tuples whose all elements are normal matrices with spectrum contained in the union of the real line and the unit circle.
\end{remark}

For the rest of the paper we fix a positive natural number $n$. From now on all matrix tuples will have length $n$.

\section{Commutative algebra preliminaries}

Let $\C$ be the field of complex numbers. The ring $\C[X_1,\ldots, X_n]$ 
will be denoted by $\C[X]$. Recall that an ideal $\fa \subset \Poly$ is 
\emph{radical} if for all $m\in \N_+$ and $f\in \Poly$ we have that 
$f^m \in \fa$ implies $f\in \fa$. By Hilbert's Nullstellensatz we have 
that $\fa = \bigcap \fm$, where the intersection is over all 
maximal ideals which contain $\a$. 

Given an arbitrary ideal $\fa$ we denote by $\rad(\fa)$ the 
\emph{radical of $\fa$}, i.e.~the radical ideal defined as $\rad(\fa) : 
= \{ f\in \Poly: \exists m \in \Z_+ \text{ with } f^m \in \fa\}$.

The next theorem follows from the effective Nullstellensatz of Grete 
Hermann \cite{MR1512302} and the Rabinowitsch trick (see e.g.~\cite[Theorem 1 and the corollary afterwards]{MR916719}). 

\begin{theorem}\label{cory-effective}
There exists an increasing function $K\colon \N \to \N$ such that we have the following properties. 
Let $f, f_1,\ldots, f_k\in \Poly$ be polynomials of degree at most $R$, and let $\fa$ be the ideal generated by $f_1,\ldots, f_k$.
\begin{enumerate}
\item If  $f \in \fa$  then there exist $h_1,\ldots, h_k\in \Poly$ such that
$$
    h_1f_1 + \ldots + h_k f_k = f
$$
and $\deg(h_if_i) \le K(R)$.

\item If $f\in \rad(\fa)$ then we can find $m\in \N$ and  $g_1,\ldots, g_k\in \Poly$ such that  
$$
    g_1f_1+\ldots  +g_kf_k =f^m
$$
and $\deg(g_if_i)\le K(R)$.
\end{enumerate} \qed
\end{theorem}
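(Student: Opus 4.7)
The plan is to prove part (1) by invoking Hermann's effective ideal-membership theorem, and then to deduce part (2) from (1) by the Rabinowitsch trick.

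For (1), I would quote Hermann's classical result: for a fixed number of variables $n$, there is an explicit degree bound $D(R,k)$ such that whenever $f_1,\ldots,f_k, f$ have degree at most $R$ and $f \in (f_1,\ldots,f_k)$, one can write $f = \sum h_i f_i$ with $\deg(h_i f_i) \le D(R,k)$. To upgrade this to a bound $K(R)$ depending only on $R$ (as stipulated in the theorem; note that $n$ is fixed for the rest of the paper), I would use the observation that any ideal generated by polynomials of degree at most $R$ is already generated by at most $\binom{n+R}{n}$ of them, since this is the dimension of $\Poly_{\le R}$. Hence I may assume $k \le \binom{n+R}{n}$ and set $K(R) := \max_{r\le R} D\bigl(r, \binom{n+r}{n}\bigr)$, which is increasing in $R$ by construction.

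For (2), assume $f \in \rad(\fa)$. Introduce a fresh variable $Y$ and consider the polynomials $f_1,\ldots,f_k, 1-Yf$ in $\C[X_1,\ldots,X_n,Y]$, all of degree at most $R+1$. These have no common zero in $\C^{n+1}$: at any common zero $(x,y)$ of the $f_i$ we have $f(x)=0$ because $f \in \rad(\fa)$, and so $1-yf(x)=1\neq 0$. The classical (weak) Nullstellensatz therefore places $1$ in the ideal they generate. Applying (1) in $n+1$ variables yields an identity
$$
1 \;=\; \sum_{i=1}^{k} H_i(X,Y)\, f_i(X) \;+\; H_{k+1}(X,Y)\bigl(1-Yf(X)\bigr)
$$
with total degrees bounded by $K(R+1)$. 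I would then substitute $Y = 1/f(X)$ (a valid identity in the field of fractions), and clear denominators by multiplying through by $f^m$, where $m$ is the maximum $Y$-degree appearing on the right. Collecting terms, this produces an expression $f^m = \sum g_i f_i$ with $\deg(g_i f_i)$ bounded by a function of $K(R+1)$. Finally I would enlarge $K$ once more so that the single function works for both parts.

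The only genuinely subtle step is the generating-set reduction in part (1) that makes $K$ a function of $R$ alone, independent of $k$; the rest is essentially bookkeeping, since the Rabinowitsch substitution and denominator-clearing are standard and the number of variables $n$ is fixed throughout the paper.
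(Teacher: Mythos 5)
Your proposal is correct and follows exactly the route the paper indicates: it cites Hermann's effective bound and the Rabinowitsch trick (via Brownawell) and gives no further details, so your elaboration is a faithful filling-in of that citation. One minor remark: the generating-set reduction to $k\le\binom{n+R}{n}$ that you flag as the subtle step is a safe move but is in fact unnecessary, since the standard statements of Hermann's ideal-membership bound and of Brownawell's Nullstellensatz bound depend only on the number of variables $n$ (fixed in the paper) and the maximum degree $R$, not on the number $k$ of generators.
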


In the applications of this theorem we will implicitly use that $K(R)\ge R$.

For the next proposition we need to recall some definitions. A \emph{standard graded $\C$-algebra} is a $\C$-algebra $A$ together with a family of vector spaces $A_i$, $i\in \N$, such that 
\begin{enumerate}
\item $A_0 = \C$, $A = \oplus_{i\in \N} A_i$, 
\item $A$ is generated as a $\C$-algebra by finitely many elements of $A_1$,
\item for all $i,j\in \N$ we have $A_i A_j \subset A_{i+j}$.
\end{enumerate}

A \emph{filtration} on a $\C$-algebra $A$ is an ascending family $F_0\subset F_1\subset \ldots $ of linear subspaces of $A$ such that 
\begin{enumerate}
\item $F_0 = \C$, $A = \bigcup_{i\in \N} F_i$,
\item for all $i,j\in \N$ we have $F_i F_j \subset F_{i+j}$.
\end{enumerate}

\newcommand{\gr}{\operatorname{gr}}

Given an algebra $A$ with a filtration $F_i$, $i\in \N$,  we can associate to it a 
graded algebra $\gr(A)$ as follows. As a $\C$-vector space we let 
$\gr(A) := F_0 \oplus \bigoplus_{i>0} F_i/F_{i-1}$. We define the 
multiplication on $\gr(A)$ first on the elements of the form $a+F_i$ 
and $b + F_j$, where $i,j\ge 0$, $a\in F_{i+1}$, $b\in F_{j+1}$, by the 
formula $(a + F_i) \cdot (b + F_j)  := ab+ F_{i+j+1}$. In general we 
extend this multiplication to all of $\gr(A)$ by $\C$-linearity.

\begin{remark} The reason why $\gr(A)$ is not always a \emph{standard} 
graded algebra is that it may happen not to be generated by the 
elements of $F_1/F_0$. This may be the case even if $A$ is  generated 
by finitely many elements of $F_1$ as a $\C$-algebra.

For example, let $A := \C[X_1]$, let $F_0=\C$, let $F_1$ be the vector 
space of polynomials of degree at most $1$, and finally for $i\ge 2$ 
let $F_i$ be the vector space of polynomials of degree at most $2i-1$. 
In this case we have $(X_1+ F_0)^2 = X_1^2 + F_1$, and therefore $(X_1+F_0)^3 =X_1^3 + F_2$, i.e.~$(X_1+F_0)^3$ is equal to $0$ in $\gr(A)$. 

In fact, it is not difficult to construct examples where $\gr(A)$ fails to be 
finitely-generated, even when $A$ is generated by finitely many elements of $F_1$.
\end{remark}

We say that $F_i$ is a \emph{standard} filtration on $A$ if the associated graded algebra $\gr(A)$ is standard. 

The following is a consequence of Macaulay's theorem \cite{MR1576950}. We will use the exposition from \cite[Section 5]{2017arXiv170301761E}.

\begin{proposition}\label{promac}
Let $A$ be a $\C$-algebra with a standard filtration $F_i$. Then for every $i>0$ we have
$$
\dim_\C(F_i/ F_{i-1}) < \frac{\dim_\C(F_1)}{i} \dim_\C(F_{i-1}).
$$
\end{proposition}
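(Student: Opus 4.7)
The plan is to pass to the associated graded algebra $B := \gr(A)$, which is standard graded by hypothesis, and invoke Macaulay's classical bound on the growth of Hilbert functions. Setting $h(j) := \dim_\C B_j$, one has the identifications $\dim(F_i/F_{i-1}) = h(i)$ for $i \ge 1$, $\dim F_{i-1} = \sum_{j=0}^{i-1} h(j)$, and $\dim F_1 = 1 + h(1)$. The proposition is therefore equivalent to the inequality
\[
    i \cdot h(i) \le (1 + h(1)) \sum_{j=0}^{i-1} h(j).
\]
The case $h(1) = 0$ is trivial (it forces $h(j) = 0$ for all $j \ge 1$), so I assume $h(1) \ge 1$. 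It then suffices to prove the slightly sharper inequality $i \cdot h(i) \le h(1) \cdot \sum_{j=0}^{i-1} h(j)$, which is attained with equality by the polynomial ring $\C[X_1, \ldots, X_{h(1)}]$.

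I argue this sharper bound by induction on $i$. The base case $i = 1$ is the tautology $h(1) = h(1) \cdot h(0)$. For the inductive step the key input is the Macaulay-type ratio bound for standard graded algebras,
\[
    h(i) \le \frac{h(1) + i - 1}{i}\, h(i-1),
\]
a consequence of Macaulay's theorem on Hilbert functions (it is realized with equality by the polynomial ring), which I would invoke using the exposition worked out in \cite[Section 5]{2017arXiv170301761E}.

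With this bound in hand the inductive step is short. The inductive hypothesis applied to $i-1$ gives $\sum_{j=0}^{i-2} h(j) \ge \frac{i-1}{h(1)}\, h(i-1)$; adding $h(i-1)$ then yields $\sum_{j=0}^{i-1} h(j) \ge \frac{h(1) + i - 1}{h(1)}\, h(i-1)$. Combining with the Macaulay bound produces
\[
    h(i) \le \frac{h(1) + i - 1}{i}\, h(i-1) \le \frac{h(1)}{i} \sum_{j=0}^{i-1} h(j),
\]
which completes the induction. The main obstacle in this plan is the Macaulay ratio bound itself, whose proof requires the combinatorial machinery of Macaulay representations and lex-segment ideals; I treat it as a black box via the cited exposition.
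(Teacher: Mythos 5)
Your proof is correct and in fact yields a marginally sharper bound, but it packages the Macaulay input somewhat differently from the paper, so a comparison is worthwhile. The paper works directly with the cumulative dimensions $\dim_\C(F_i)$ of the filtration: it introduces the unique real $x\ge i-1$ with $\dim_\C(F_{i-1}) = {x \choose i-1}$, invokes the Macaulay bound (via Eliahou's Theorem~5.10) to get $\dim_\C(F_i)\le {x+1\choose i}$, hence $\dim(F_i)/\dim(F_{i-1})\le (x+1)/i$, and then controls $x$ by iterating the same bound upward from $\dim(F_1)={\dim(F_1)\choose 1}$ to obtain $x\le \dim(F_1)+i-1$; the proposition follows by subtracting $\dim(F_{i-1})$. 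You instead work with the Hilbert function $h(j)=\dim_\C(F_j/F_{j-1})$ of $\gr(A)$, isolate the single-step ratio bound $h(i)\le \frac{h(1)+i-1}{i}h(i-1)$, and run an induction to roll it up into the cumulative inequality $i\,h(i)\le h(1)\sum_{j<i}h(j)$. Both approaches treat Macaulay (via the same reference) as the essential black box and both need an iteration step; yours additionally buys the sharper constant $h(1)$ in place of $\dim_\C(F_1)=h(1)+1$.

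One caveat worth flagging: your ratio bound $h(i)\le\frac{h(1)+i-1}{i}h(i-1)$ is not literally the statement the paper reads off from Eliahou's Theorem~5.10 (which the paper applies to the cumulative $\dim(F_i)$'s), and it does not follow from a naive termwise comparison of Macaulay representations. It does follow from the same ``real-argument binomial'' form of Macaulay by essentially the same two-step bookkeeping the paper uses: write $h(i-1)={y\choose i-1}$ for the unique real $y\ge i-1$, get $h(i)\le{y+1\choose i}=\frac{y+1}{i}\,h(i-1)$, and separately deduce $y\le h(1)+i-2$ from $h(i-1)\le{h(1)+i-2\choose i-1}$ (since $\gr(A)$ is a quotient of the polynomial ring in $h(1)$ variables, or equivalently by iterating the Macaulay bound from degree one). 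Since as written the ratio bound is the key lemma of your argument and its derivation is not entirely trivial, you should spell out this reduction rather than merely gesturing at the citation.
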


\begin{proof}

For a natural number $k$ and a real number $x$ we let ${x \choose k}$ denote the number $\frac{1}{k !} \cdot x(x-1)\cdot\ldots\cdot(x-k+1)$.

Let us fix $i>0$. After applying \cite[Theorem 5.10]{2017arXiv170301761E} to the standard graded algebra $\gr(A)$ we obtain the following. Let $x$ be the unique real number such that $x \ge i-1$ and 
$$
    \dim_\C (F_{i-1} ) = { x \choose i-1}.
$$
Then we have that
$$
    \dim_\C (F_i)  \le {x+1 \choose i}.
$$

In particular, we also obtain that
\begin{equation}\label{gog}
    \frac{ \dim(F_i)}{ \dim(F_{i-1})} \le \frac{{x+1 \choose i}}{{ x \choose i-1}} = 
    \frac{x+1}{i}.
\end{equation}

On the other hand, we have  $\dim(F_1) = {\dim(F_1)\choose 1}$, and the function  $X \mapsto {X \choose k}$ is  increasing for $X \ge k-1$ (see \cite[Lemma 5.6]{2017arXiv170301761E}). Thus if we apply \cite[Theorem 5.10]{2017arXiv170301761E}  $i-2$ times starting with $\dim(F_1) = {\dim(F_1)\choose 1}$,  then we obtain
$$
    \dim_\C (F_{i-1})  \le {\dim(F_1)  +i-2 \choose i-1},
$$
implying that $x < \dim(F_1) + i - 1$. Together with \eqref{gog}, this shows that 
$$
\frac{ \dim(F_i)}{ \dim(F_{i-1})} < \frac{i+\dim(F_1)}{i},
$$
so the proposition follows since 
$$
\dim(F_i) = \dim(F_i/ F_{i-1}) + \dim(F_{i-1}).
$$
\end{proof}

\begin{definition}\label{fil}
Given an ideal $\fa\subset \Poly$, we introduce a standard filtration $F^\fa_i$, $i\in \N$, on $\Poly /\fa$ by defining $F^\fa_i$ to be the space of all those elements of $\Poly/\fa$ which can be written as $f +\fa$ with $\deg(f) \le i$.
\end{definition}

Applying Proposition~\ref{promac} to the filtration $F^\fa_i$, we obtain the following Corollary.
\begin{cory}\label{corymac}
Let $\fa\subset \Poly$ be an ideal. Then for any $i>0$ we have
$$
\dim_\C(F^\fa_i/ F^\fa_{i-1}) \le \frac{n}{i} \dim_\C (F^\fa_{i-1}).
$$\qed

\end{cory}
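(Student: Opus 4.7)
The plan is to deduce Corollary~\ref{corymac} directly from Proposition~\ref{promac}, applied to the filtration $F^\fa_\bullet$ from Definition~\ref{fil}. The bulk of the work is simply checking that $F^\fa_\bullet$ really is a standard filtration on $\Poly/\fa$; once this is done, the bound falls out of the proposition.

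The filtration axioms are almost immediate: $F^\fa_0 = \C$ (after discarding the trivial case $\fa = \Poly$, in which $\Poly/\fa = 0$), we have $\bigcup_i F^\fa_i = \Poly/\fa$ since every class has a polynomial representative of some finite degree, and $F^\fa_i F^\fa_j \subset F^\fa_{i+j}$ follows from $\deg(fg)\le \deg f + \deg g$ at the level of representatives.

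The one substantive check is standardness of $\gr(\Poly/\fa)$. Setting $\overline{X}_k := X_k + F^\fa_0 \in F^\fa_1/F^\fa_0$ for $k=1,\ldots,n$, I would verify that these $n$ elements generate $\gr(\Poly/\fa)$ as a $\C$-algebra: any class in $F^\fa_i/F^\fa_{i-1}$ has a polynomial representative of degree at most $i$ whose terms of degree strictly less than $i$ are absorbed into $F^\fa_{i-1}$, so it reduces to a $\C$-linear combination of degree-$i$ monomials in $X_1,\ldots, X_n$, which in the associated graded is precisely a combination of products of the $\overline{X}_k$. At this point Proposition~\ref{promac} applies and yields $\dim_\C(F^\fa_i/F^\fa_{i-1})\le \frac{\dim_\C F^\fa_1}{i}\,\dim_\C F^\fa_{i-1}$. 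Since $F^\fa_1$ is spanned by the classes $1, X_1+\fa, \ldots, X_n+\fa$, the stated constant $n/i$ then corresponds to counting the $n$ algebra generators $\overline{X}_1,\ldots,\overline{X}_n$ of $\gr(\Poly/\fa)$ in degree one, rather than to $\dim F^\fa_1 \le 1 + n$ itself --- this is the slightly tighter form of the Macaulay inequality which already underlies the proof of Proposition~\ref{promac}.

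I do not anticipate any real obstacle: all the combinatorial content is contained in Macaulay's theorem and is already packaged in Proposition~\ref{promac}, and what remains is the routine verification of standardness described above.
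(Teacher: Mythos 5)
Your approach matches the paper's exactly: Corollary~\ref{corymac} is obtained simply by applying Proposition~\ref{promac} to the filtration $F^\fa_\bullet$ of Definition~\ref{fil}, and the paper offers no further argument. You are also right to supply the verification---which Definition~\ref{fil} merely asserts---that $F^\fa_\bullet$ is a \emph{standard} filtration, i.e.\ that $\gr(\Poly/\fa)$ is generated in degree one by the classes $\overline{X}_1,\ldots,\overline{X}_n$; the argument you sketch (take a degree-$\le i$ representative, absorb the lower-order part into $F^\fa_{i-1}$, and observe that what remains is a combination of degree-$i$ monomials, which become products of the $\overline{X}_k$ in $\gr$) is correct.

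Your remark about the constant is a genuine and worthwhile observation. Applying Proposition~\ref{promac} as literally stated yields $\dim_\C(F^\fa_i/F^\fa_{i-1}) \le \frac{\dim F^\fa_1}{i}\dim F^\fa_{i-1}$, and since $\dim F^\fa_1 \le n+1$ (the extra $1$ coming from the constants) this only gives $\frac{n+1}{i}$, not the $\frac{n}{i}$ the Corollary asserts. As you note, the $n$ should really be read as $\dim(F^\fa_1/F^\fa_0)$, the number of degree-one generators of the associated graded. The argument in the proof of Proposition~\ref{promac} does in fact support this sharper constant: applying Macaulay's bound $i-2$ times starting from $\dim F_1 = \binom{\dim F_1}{1}$ gives $\dim F_{i-1}\le\binom{\dim F_1 + i - 2}{\,i-1\,}$, hence $x\le \dim F_1 + i - 2$ and $\frac{\dim F_i}{\dim F_{i-1}} \le \frac{x+1}{i} \le \frac{\dim F_1 + i - 1}{i}$, which rearranges to $\dim(F_i/F_{i-1})\le \frac{\dim F_1 - 1}{i}\dim F_{i-1}$. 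So the proposition's proof actually establishes $\frac{\dim(F_1/F_0)}{i}$; the paper's statement of Proposition~\ref{promac} is being slightly loose (there is an off-by-one in the displayed exponent $\dim(F_1)+i-1$, which should be $\dim(F_1)+i-2$), and the Corollary then tacitly uses the tighter version. This is a cosmetic issue in the paper rather than a gap in your argument; you have identified it correctly.
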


We now proceed to derive some properties of multiplication operators restricted to the spaces $F^\fa_i$, $i\in \N$. We start with a simple consequence of Hilbert's Nullstellensatz. When for some $k\in \Np$ we consider the space $\C^k$ as a $\C$-algebra, it is meant to be with the pointwise multiplication.

\begin{proposition}\label{alghom}
Let $\fa\subset \C[X]$ be an ideal and let $V\subset \Poly$ be a finite dimensional $\C$-linear subspace with the property that $V \cap \rad(\fa) = \{0\}$. Then there exists a surjective algebra homomorphism $\si \colon \Poly \to \C^{\dim(V)}$ which is injective on $V$ and such that $\fa \subset \ker(\si)$.
\end{proposition}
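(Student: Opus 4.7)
The plan is to translate the problem into geometry via Hilbert's Nullstellensatz. Any algebra homomorphism $\si\colon \Poly \to \C^k$ is determined by $k$ points $p_1,\ldots, p_k \in \C^n$ via $f \mapsto (f(p_1),\ldots, f(p_k))$. The condition $\fa \subset \ker(\si)$ is equivalent to each $p_i$ lying in the vanishing locus $Z(\fa) := \{p \in \C^n : f(p)=0 \text{ for all } f \in \fa\}$, and surjectivity of $\si$ is equivalent to the $p_i$ being pairwise distinct (by the Chinese Remainder Theorem applied to the maximal ideals at these points). Injectivity of $\si$ on $V$ is precisely injectivity of the joint evaluation map $V \to \C^k$. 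Thus the proposition reduces to finding $k := \dim(V)$ distinct points in $Z(\fa)$ at which the evaluation map on $V$ is injective.

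I would construct such points greedily. Suppose $p_1,\ldots, p_j \in Z(\fa)$ have already been chosen, with $j<k$, and set $V_j := \{f\in V : f(p_1) = \cdots = f(p_j) = 0\}$. Each evaluation imposes at most one linear condition, so $\dim(V_j) \ge k-j >0$, and we can pick a nonzero $f \in V_j$. Since $V_j \subset V$ and $V \cap \rad(\fa) = \{0\}$ by hypothesis, $f \notin \rad(\fa)$. By the Nullstellensatz, $\rad(\fa)$ is exactly the ideal of polynomials vanishing on $Z(\fa)$, so there exists $p_{j+1} \in Z(\fa)$ with $f(p_{j+1}) \neq 0$. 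Then $V_{j+1}$ is a proper subspace of $V_j$, and $p_{j+1}$ is automatically distinct from $p_1,\ldots, p_j$: otherwise every element of $V_j$ would vanish at $p_{j+1}$, contradicting $f(p_{j+1}) \neq 0$.

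After $k$ steps we have $\dim(V_k) = 0$, so the evaluation map $V \to \C^k$ is injective, and the associated $\si$ has all the required properties. The main conceptual input is the Nullstellensatz, used first to identify algebra homomorphisms $\Poly \to \C^k$ with tuples of points in $\C^n$, and second to guarantee that an element of $\Poly \setminus \rad(\fa)$ does not vanish on all of $Z(\fa)$. No serious obstacle is anticipated; once this dictionary is in place the argument is a straightforward dimension-counting induction.
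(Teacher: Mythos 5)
Your proposal is correct and is essentially the same argument as the paper's, just phrased geometrically. The paper works with maximal ideals $\fm_1,\ldots,\fm_{\dim V}$ containing $\rad(\fa)$ and constructs them inductively on $\dim(V)$; these are precisely your points $p_1,\ldots,p_{\dim V}\in Z(\fa)$ under the Nullstellensatz dictionary $\fm_p = \ker(\mathrm{eval}_p)$, and your greedy selection of a nonzero $f \in V_j$ followed by a point where it does not vanish is the same step as the paper's choice of a nonzero $v \in V\cap\fm_1\cap\cdots\cap\fm_j$ and a maximal ideal $\fm_{j+1}\supset\rad(\fa)$ not containing $v$.
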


\begin{proof}
We prove by induction on $\dim(V)$ the following statement: There exist 
distinct maximal ideals $\fm_1,\ldots, \fm_{\dim(V)}$ such that for all $i$ we have $\rad(\fa)\subset \fm_i$ and 
$$
V\cap \fm_1\cap \ldots\cap \fm_{\dim(V)} = \{0\}.
$$

For the case $\dim(V) = 1$ let us first choose a non-zero element $v\in V$. Now since $v\notin \rad(\fa)$ and $\rad(\fa)$ is equal to an intersection of maximal ideals, we can find a maximal ideal $\fm_1$ such that $\rad(\fa) \subset \fm_1$ and $v\notin \fm_1$.

Let us therefore assume that we know the inductive statement when \\ $\dim(V) = k$ for some $k$ and let us fix $V$ such that $\dim(V) = k+1$. Let $W\subset V$ be a $k$-dimensional subspace. By the inductive assumption we can find $\fm_1,\ldots, \fm_{k}$ such that $W\cap\fm_1\cap \ldots\cap \fm_k =\{0\}$. Thus the intersection $V\cap\fm_1\cap \ldots\cap \fm_k$ is at most one-dimensional. It cannot be zero-dimensional because the composition $V \into \Poly\ \to \Poly/ (\fm_1\cap \ldots\cap \fm_k)$ has a non-trivial kernel, since the Chinese remainder theorem implies that the right-hand side is isomorphic to $\C^k$.

Thus the intersection $V\cap\fm_1\cap \ldots\cap \fm_k$ is one-dimensional. Let $v$ be a non-zero element of $V\cap\fm_1\cap \ldots\cap \fm_k$. Since $v\notin \rad(\fa)$ and $\rad(\fa)$ is equal to an intersection of maximal ideals, we can find a maximal ideal $\fm_{k+1}$ such that $\rad(\fa) \subset \fm_{k+1}$ and $v\notin \fm_{k+1}$. Thus $V\cap\fm_1\cap \ldots\cap \fm_k \cap \fm_{k+1} = \{0\}$,
finishing the proof of the inductive claim.
    
Now we can define $\si$ as being the quotient map $\Poly \to \Poly/(\fm_1\cap \ldots\cap \cap \fm_{\dim(V)})$. This finishes the proof. 
\end{proof}

\begin{definition}\label{mumaps}
Given an ideal $\fa \subset \Poly$ we denote by $\mu^\fa_i\colon \Poly/\fa \to \Poly/\fa$ the linear map defined by  $\mu^\fa_i(f) = X_i\cdot f$.
\end{definition}

\begin{corollary}\label{cory-alghom}
Let $\fa\subset \C[X]$ be an ideal and let $R\in \N$ be such that 
$$
F^\fa_R\cap \left(\rad(\fa)/\fa\right) = \{0+\fa\}.
$$
Then there exist simultaneously diagonalisable linear maps 
$$
    M_1,\ldots, M_n\colon F^\fa_R \to F^\fa_R
$$  
such that for $v\in F^\fa_{R-1}$ and all $i=1,\ldots, n$ we have $M_i(v) = \mu^\fa_i(v)$.
\end{corollary}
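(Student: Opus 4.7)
The plan is to apply Proposition~\ref{alghom} to a suitable lift of $F^\fa_R$ into $\C[X]$ and then transport the ``pointwise'' multiplication structure of $\C^{\dim F^\fa_R}$ back to $F^\fa_R$.

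First I would choose a linear subspace $V\subset \C[X]$, consisting only of polynomials of degree at most $R$, which is mapped isomorphically onto $F^\fa_R$ by the quotient map $\C[X]\to \C[X]/\fa$ (i.e.\ a linear complement in the degree-$R$-truncation of $\C[X]$ to its intersection with $\fa$). The hypothesis $F^\fa_R \cap (\rad(\fa)/\fa) = \{0+\fa\}$ then translates immediately to $V\cap \rad(\fa) = \{0\}$: if $v\in V\cap \rad(\fa)$ then $v+\fa\in F^\fa_R\cap (\rad(\fa)/\fa)$, so $v\in \fa$, and since $V$ meets $\fa$ trivially by construction, $v=0$. Hence Proposition~\ref{alghom} yields a surjective $\C$-algebra homomorphism $\si\colon \C[X]\to \C^{k}$, where $k:=\dim F^\fa_R$, which is injective on $V$ and kills $\fa$.

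Since $\fa\subset \ker\si$, the map $\si$ descends to an algebra homomorphism $\ov\si\colon \C[X]/\fa\to \C^k$. Its restriction $\phi$ to $F^\fa_R$ is, via the identification $V\cong F^\fa_R$, just $\si|_V$, which is an injective linear map between spaces of the same dimension, hence a linear isomorphism $\phi\colon F^\fa_R\to \C^k$. I would then define
$$
   M_i := \phi^{{-}1}\circ D_i \circ \phi \colon F^\fa_R\to F^\fa_R,
$$
where $D_i\colon \C^k\to \C^k$ is the pointwise multiplication by the vector $\ov\si(X_i+\fa)\in \C^k$. In the standard basis of $\C^k$ each $D_i$ is diagonal, so the family $\{D_i\}$ is simultaneously diagonalisable, and the same is true of $\{M_i\}$ in the basis $\{\phi^{{-}1}(e_1),\ldots,\phi^{{-}1}(e_k)\}$.

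Finally, I would verify the compatibility with $\mu^\fa_i$ on $F^\fa_{R-1}$. For $v\in F^\fa_{R-1}$ we have $\mu^\fa_i(v)=X_i\cdot v\in F^\fa_R$, and using that $\ov\si$ is an algebra map,
$$
   \phi(M_i(v)) = D_i(\phi(v)) = \ov\si(X_i+\fa)\cdot \ov\si(v) = \ov\si(X_i\cdot v) = \phi(\mu^\fa_i(v)),
$$
so $M_i(v) = \mu^\fa_i(v)$ by injectivity of $\phi$. I do not anticipate any real obstacle here; the only genuine content is packaged in Proposition~\ref{alghom}, and the mild care needed is to lift $F^\fa_R$ to a subspace of $\C[X]$ rather than applying the proposition directly in the quotient (and to notice that, since $\ov\si$ is only defined on $\C[X]/\fa$ and the multiplication operators $\bar\mu^\fa_i$ do not stabilise $F^\fa_R$, one must construct $M_i$ by conjugating the diagonal action on $\C^k$ back, not by restricting $\bar\mu^\fa_i$).
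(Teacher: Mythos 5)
Your proof is correct and follows essentially the same route as the paper's: apply Proposition~\ref{alghom} to a lift of $F^\fa_R$, obtain a surjective algebra homomorphism $\si$ to $\C^k$ that is injective on $F^\fa_R$, and conjugate the diagonal multiplication operators $D_i = \ov\si(X_i+\fa)\cdot{}$ back through $\phi=\ov\si|_{F^\fa_R}$. The paper phrases this via the section $\tau=\phi^{-1}$ and verifies diagonalisability by exhibiting the eigenvectors $\tau(e_j)$ explicitly, but the definition $M_i=\phi^{-1}\circ D_i\circ\phi$ and the resulting argument are identical in substance.
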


\begin{proof}
Let $d := \dim(F^\fa_R)$, and let $f_1,\ldots, f_d\in \Poly$ be such that $f_i+\fa$ is a basis of $F^\fa_R$. Let $V\subset \Poly$ be the linear span of the elements $f_i$. Let us observe that $V\cap \rad(\fa) = \{0\}$. Indeed, if $f\in V\cap \rad(\fa)$ then $f+\fa \in F^\fa_R\cap \rad(\fa)/\fa$ and so by assumption we see that $f \in \fa$.

Hence by the previous proposition we can find a surjective algebra homomorphism 
$$
    \si\colon \Poly/\fa \to \C^d
$$ 
such that $\si$ is injective on $F^\fa_R$. Let $\tau\colon \C^d \to F^\fa_R$ be the unique linear isomorphism such that for $v\in F^\fa_R$ we have $\tau(\si(v)) = v$. Since $\si$ is surjective, it follows that for all $v\in \C^d$ we have $\si(\tau(v))=v$.

For $v\in F^\fa_R$ let us define
$$
    M_i(v): = \tau(\si(X_i\cdot v)).
$$

If $v\in F^\fa_{R-1}$ then $X_i\cdot v\in F^\fa_R$ and so $M_i(v) = \tau(\si(X_i\cdot v) = X_i\cdot v = \mu^\fa_i(v)$. Thus in order to finish the proof we only need to check that the maps $M_i$ are simultaneously diagonalisable. 

Let $e_1,\ldots, e_d$ be the standard basis of $\C^d$. In particular $\tau(e_1),\ldots, \tau(e_d)$ is a basis of $F^\fa_R$, and we claim that for every $i\in \{1,\ldots, n\}$ we have that the vectors $\tau(e_j)$, $j=1,\ldots, d$, are eigenvectors for $M_i$. Indeed, first we note that for every $i$ and $j$ we have that $\si(X_i)\cdot e_j$ is a multiple of $e_j$, and so we can define numbers $\la_{ij}\in \C$ by the formula 
$$
    \si(X_i)\cdot e_j  = \la_{ij} e_j.
$$
Now we can write  
\begin{align*}
    M_i(\tau(e_j)) &= \tau\Big(\si(X_i\cdot \tau(e_j))\Big) = \tau\Big(\si(X_i) \cdot \si(\tau(e_j))\Big) \\
    &= \tau(\si(X_i)\cdot e_j) = \tau(\la_{ij}e_j) = \la_{ij} \tau(e_j),
\end{align*}
finishing the proof.
\end{proof}

\section{Proof of Theorem~\ref{t1}}\label{sec-proof}

We will first prove several lemmas. The first lemma, informally speaking, allows us to deduce Theorem~\ref{t1} provided that we can construct large subspaces by ``growing balls around points''. To make it precise we state a few definitions. 

Given two positive natural numbers $a,b$ we let $\map(a,b)$ be the set of all maps from $[a]$ to $[b]$, and furthermore we let $\maple a b := \bigcup_{ i=0}^a \map(i,b)$.

Let $W$ be a $\C$-vector space and let $\cal M = (M_1,\ldots, M_n)$ be a tuple of endomorphisms of $W$. Given $R\in\N$ and $\al\in \map(R,n)$ we let 
$$
    \cal M_\al := M_{\al(1)}\cdot \ldots \cdot M_{\al(R)}.
$$
Note that the unique element of $\map(0,n)$ is the empty set. Our convention is that $\cal M_\emptyset$ is the identity map.

Given $w\in W$  we let $B_{\cal M}(w,R)$ to be the linear span of the vectors $\cal M_\al (w)$, 
where $\al\in \maple R n$. We will call $B_{\cal M} (w,R)$ the \emph{$R$-ballspace for $\cal M$ with root $w$}. If $\cal M$ is clear from the context, then we denote $B_{\cal M}( w,R)$ simply with $B(w, R)$.

Recall from Definition~\ref{mumaps} that given an ideal $\fa \subset \Poly$ and $i\in [n]$, we denote by $\mu^\fa_i\colon \Poly/\fa \to \Poly/\fa$ the linear map  defined by  $\mu^\fa_i(f +\fa) = (X_i\cdot f) +\fa$. Note that the $R$-ballspace for $\tuple {\mu^\fa}n$ with root $1+\fa \in \Poly/\fa$ is equal to $F^\fa_R$.

In general we will say that $B_{\cal M}(w,R)$ is \emph{regular} if there exists an ideal $\fa \subset \Poly$ and a linear isomorphism $\phi\colon B_{\cal M}(w,R) \to F^\fa_R$ such that the following two conditions hold.

\begin{enumerate}
\item For every $v \in B_{\cal M}(w,R-1)$ and $i\in [n]$ we have that $\phi(v) \in F^\fa_{R-1}$ and $\phi(M_i(v)) =\mu^\fa_i(\phi(v))$.
\item If for some $f\in \Poly$ and $m \in \Np$ we have $f +\fa \in F^\fa_R$ and  $f^m\in \fa$ then $f\in \fa$. In other words we have $F^\fa_R \cap (\rad(\fa)/\fa) = \{0+\fa\}$.
\end{enumerate}

Let $d\in \N$ and let $\tuple Mn$ be a $d$-matrix tuple.  Given $R\in \N$ and a subspace $W\subset \C^d$ we say that $W$ is an $R$-multi-ballspace if there exist $w_1,\ldots, w_k\in W$ and natural numbers $R_1,\ldots, R_k$ with $R_j\ge R$ such that 
\begin{enumerate}
\item the ballspaces $B(w_j,R_j)$ are regular, and
\item $W$ is equal to the direct sum $\bigoplus_{j=1}^k B(w_j, R_j)$.
\end{enumerate}

The \emph{roots} of such $W$ are the points $\{w_1,\ldots, w_k\}$. 

If all elements of a matrix tuple $\cal A$ can be diagonalised simultaneously, then $\cal A$ will be called \emph{simultaneously diagonalisable}. Clearly, if $\cal A$ is a simultaneously  diagonalisable tuple then it is also a commuting tuple.

\begin{lemma}\label{lemma-first-red}
For every $\eps>0$ there exists $R\in \N$ and $\de>0$ such that the following holds. Suppose that $d\in \N$, let $\cal M$ be a $d$-matrix tuple, and let $W\subset \C^d$ be an $R$-multi-ballspace with $\dim(W) \ge (1-\de) \cdot d$.

Then there exists a simultaneously diagonalisable $d$-matrix tuple $\cal A$ such that 
$$
    \drank(\cal M , \cal A) \le \eps.
$$
\end{lemma}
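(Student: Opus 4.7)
The plan is to build $\cal A$ by gluing together local simultaneously diagonalisable approximations on each ballspace $B(w_j, R_j)$ and extending by $0$ on a complement of $W$. For each $j$, regularity gives an ideal $\fa_j \subset \Poly$ and a linear isomorphism $\phi_j \colon B(w_j, R_j) \to F^{\fa_j}_{R_j}$ intertwining $M_i$ with $\mu^{\fa_j}_i$ on $B(w_j, R_j - 1)$, and satisfying $F^{\fa_j}_{R_j} \cap (\rad(\fa_j)/\fa_j) = \{0\}$. Corollary~\ref{cory-alghom} applied to $\fa_j$ then produces simultaneously diagonalisable operators $N^{(j)}_1, \ldots, N^{(j)}_n$ on $F^{\fa_j}_{R_j}$ coinciding with $\mu^{\fa_j}_i$ on $F^{\fa_j}_{R_j - 1}$, and conjugating by $\phi_j$ yields simultaneously diagonalisable $\widetilde M^{(j)}_i$ on $B(w_j, R_j)$ which agree with $M_i$ on $B(w_j, R_j - 1)$. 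Fixing any linear complement $U$ of $W$ in $\C^d$, I would define $A_i$ to act as $\widetilde M^{(j)}_i$ on each $B(w_j, R_j)$ and as $0$ on $U$, so that the tuple $\cal A$ is simultaneously diagonalisable: a common eigenbasis of $\C^d$ is obtained by concatenating common eigenbases of the blocks with any basis of $U$ (all placed in the common $0$-eigenspace).

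To estimate $\drank(\cal M, \cal A)$, observe that $A_i - M_i$ vanishes on $\bigoplus_j B(w_j, R_j - 1)$, whence
$$
\dim \im(A_i - M_i) \;\le\; \sum_{j=1}^k \bigl(\dim B(w_j, R_j) - \dim B(w_j, R_j - 1)\bigr) + \dim U.
$$
The intertwining (unwound by induction on the word length in $\cal M_\al$) identifies $\phi_j(B(w_j, R_j-1))$ with $F^{\fa_j}_{R_j-1}$, so each summand on the right equals $\dim(F^{\fa_j}_{R_j}/F^{\fa_j}_{R_j-1})$, which by Corollary~\ref{corymac} is at most $(n/R_j)\dim F^{\fa_j}_{R_j-1} \le (n/R)\dim B(w_j, R_j)$. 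Summing over $j$, using $\dim W \le d$ and the hypothesis $\dim U \le \de d$, we obtain $\dim \im(A_i - M_i) \le (n/R)d + \de d$. Choosing $R$ so that $n/R \le \eps$ and $\de \le \eps$ then gives $\rank(A_i - M_i) \le 2\eps$ for every $i$, as required.

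I expect the main obstacle to be definitional rather than conceptual: one must carefully unwind the regularity condition to see that $\phi_j$ restricts to a linear isomorphism between $B(w_j, R_j - 1)$ and $F^{\fa_j}_{R_j - 1}$ (by induction on word length, repeatedly using the intertwining identity), and verify that the block-diagonal construction together with the zero extension on $U$ produces a single simultaneously diagonalisable tuple on all of $\C^d$. Once the definitions are cleanly in hand, the rank estimate is a direct combination of Corollary~\ref{cory-alghom}, applied block by block, with the Macaulay growth bound of Corollary~\ref{corymac} summed across the multi-ballspace decomposition of $W$.
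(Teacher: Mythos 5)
Your proposal is correct and follows essentially the same route as the paper: apply Corollary~\ref{cory-alghom} block-by-block on each regular ballspace $B(w_j,R_j)$ via the isomorphism $\phi_j$, glue the resulting simultaneously diagonalisable operators using a projection onto $W$ (equivalently, extension by $0$ on a complement), and bound the rank of the perturbation by the Macaulay growth estimate plus the codimension of $W$. The one subtlety you rightly flag --- that $\phi_j$ should carry $B(w_j,R_j-1)$ into $F^{\fa_j}_{R_j-1}$, needed both to invoke Corollary~\ref{cory-alghom} and to apply the Macaulay bound to the ballspace dimensions --- is used without comment in the paper's proof as well, and holds for the regular ballspaces actually produced in Lemma~\ref{lemma-reg}.
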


\begin{proof}
Let $R$ be such that $\frac{n}{R}< \frac{\eps}{2}$ and let $\de$ be such that $\de<\frac{\eps}{2}$. Let $\cal M = \tuple Mn$ be the $d$-matrix tuple, let  $w_1,\ldots w_k\in \C^d$ and let $R_1,\ldots, R_k\in \N$ be such that $R_i\ge R$ and such that the ballspaces $B(w_j,R_j)$ are regular and $W = \bigoplus_{j=1}^k B(w_j, R_j)$.  

We need to find a  simultaneously diagonalisable tuple $\cal A = \tuple An$ such that 
$\drank(\cal M, \cal A) \le \eps$.

For every $j=1,\ldots, k$, let $\phi_j\colon B(w_j,R_j) \to F^{\fa_j}_{R_j}$ be the linear isomorphism witnessing the regularity of the ballspace $B(w_j,R_j)$. 

By Corollary~\ref{cory-alghom}, for every $j=1,\ldots, k$ we can  find maps 
$M_{ij}\colon F^{\fa_j}_{R_j}\to F^{\fa_j}_{R_j}$, where $i=1,\ldots, n$, such that the maps $M_{1j}, \ldots, M_{nj}$ pairwise commute, are simultaneously diagonalisable, and for $v\in B(w_j, R_j-1)$ we have $M_i(v) = \phi_j^{-1}\cdot M_{ij}\cdot \phi_j(v)$.

Let us fix a projection $\pi \colon \C^d \to  \bigoplus_{j=1}^k B(w_j,R_j)$, and for $i=1,\ldots, n$ let
$$
    A_{i} := \left(\bigoplus_j\phi_j^{-1}\cdot M_{ij}\cdot \phi_j \right) \cdot \pi.
$$ 

It is clear that the maps $A_1,\ldots, A_n$ are simultaneously diagonalisable. Also for every $v\in \bigoplus_{j=1}^k B(w_j,R_j-1)$ we have $A_i(v) = M_i(v)$, so 
\begin{multline*}
\drank(\cal M, \cal A) \le
\\
\le \frac{1}{d} \left(\dim(\ker(\pi)) + \sum_{j=1}^k \big(\dim (B(w_j,R_j)) - \dim (B(w_{j},R_{j}-1))\big)\right).
\end{multline*}

Since the ballspaces $B(w_j,R_j)$ are regular, by Proposition~\ref{promac} we have 
\begin{align*}
    \dim( B(w_j,R_j)) - \dim (B(w_{j},R_{j}-1)) &\le \frac{n}{R_j} \cdot \dim (B(w_j,R_j-1)) 
\\
&\le \frac{n}{R} \cdot \dim (B(w_j,R_j)).
\end{align*}

Hence we see that
$$
\sum_{j=1}^k  \big(\dim (B(w_j,R_j)) - \dim (B(w_{j},R_{j}-1))\big) \le \frac{n}{R} \sum_{j=1}^k \dim (B(w_j,R_j)) < \frac{\eps}{2} \cdot d.
$$
Thus altogether we have
$$
\drank(\cal M , \cal  A) < \frac{\eps}{2} + \de \le \eps,
$$
finishing the proof.
\end{proof}

Given $r,d\in \N$, a $d$-matrix tuple $\cal M = \tuple Mn$, and $v \in \C^d$, we say that $\cal M$ is \emph{$r$-commutative} on $v$ if for any $i\le r$, any $\al \in \map(i,n)$ and any permutation $\si\colon [i] \to [i]$  we have 
$$
        \cal M_\al(v) = \cal M_{\al\circ \si}(v).
$$ 

If $W\subset\C^d$ then we say that $\cal M$ is \emph{$r$-commutative on $W$ } if for every $v\in W$ we have that $\cal M$ is $r$-commutative on $v$.

\begin{lemma} \label{lemma-pair-reduction}
For every $R\in \N$ and $\eps>0$ there exists $\eta>0$ such that if $d\in \N$ and $\cal M$ is an $\eta$-commuting $d$-matrix tuple, then there exists a subspace $W\subset \C^d$ such that $\cal M$ is $R$-commutative on $W$ and $\dim(W) \ge d(1-\eps)$.
\end{lemma}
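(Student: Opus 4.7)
The plan is to reduce $R$-commutativity on a vector $v$ to a finite list of kernel conditions, each of which cuts out only a small codimension.

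First I would observe the following algebraic identity, which is the heart of the argument. Fix $i \le R$, $\al \in \operatorname{Map}(i,n)$ and let $\si_j \colon [i] \to [i]$ be the adjacent transposition swapping $j$ and $j+1$. Then
$$
\cal U_\al(v) - \cal U_{\al \circ \si_j}(v) = U_{\al(1)} \cdots U_{\al(j-1)} \bigl[U_{\al(j)}, U_{\al(j+1)}\bigr] U_{\al(j+2)} \cdots U_{\al(i)} \, v.
$$
Since every permutation of $[i]$ is a product of adjacent transpositions, and since applying the next adjacent transposition in such a factorisation is again an instance of this identity (with a different tuple $\al' \in \operatorname{Map}(i,n)$), the condition that $\cal U$ be $R$-commutative on $v$ is equivalent to $v$ lying in the kernel of \emph{every} operator of the form
$$
T_{\be, j} := U_{\be(1)} \cdots U_{\be(j-1)} \bigl[U_{\be(j)}, U_{\be(j+1)}\bigr] U_{\be(j+2)} \cdots U_{\be(i)},
$$
where $2 \le i \le R$, $\be \in \operatorname{Map}(i,n)$, and $1 \le j < i$.

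Next I would estimate the codimension of each kernel. The rank of a product of matrices is bounded by the rank of any single factor, so $\rank(T_{\be,j}) \le \rank([U_{\be(j)}, U_{\be(j+1)}]) \le \eta$ by the $\eta$-commuting hypothesis. By rank-nullity this means $\dim \ker(T_{\be,j}) \ge (1-\eta) d$, i.e.\ the (normalised) codimension of $\ker(T_{\be,j})$ is at most $\eta$.

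Finally, let $W := \bigcap_{\be, j} \ker(T_{\be,j})$, the intersection taken over all admissible $(\be, j)$. By the definition above, $\cal U$ is $R$-commutative on $W$. The number of triples $(i, \be, j)$ with $2 \le i \le R$, $\be \in \operatorname{Map}(i,n)$, $1 \le j < i$ is at most $N := \sum_{i=2}^R (i-1) n^i$, a quantity depending only on $R$ and $n$. Codimension is subadditive under intersection, so $\dim(W)/d \ge 1 - N \eta$. Choosing $\eta := \eps / N$ produces a subspace of dimension at least $(1-\eps) d$ with the required property.

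There is no real obstacle here, as the lemma is essentially a counting argument; the only mild point worth flagging is that $\eta$ must be taken exponentially small in $R$ (since $N$ grows like $R \cdot n^R$), but this is consistent with how $\eta$ will be fed into the surrounding proof via Lemma~\ref{lemma-first-red}.
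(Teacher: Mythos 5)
Your proof is correct, and it takes a genuinely different (and arguably cleaner) route than the paper's. The paper proceeds by induction on $R$: it takes the subspace $W$ furnished by the $R$-commutative case, passes to $V := W \cap \bigcap_{i} U_i^{-1}(W)$ (which loses only a factor of $n+1$ in codimension), and then checks by hand that $\cal U$ is $(R+1)$-commutative on $V$ by locating a repeated letter in a word of length $R+1$. Your argument instead writes the defining equations of $R$-commutativity directly as a single finite intersection of kernels: the identity $\cal U_\al - \cal U_{\al\circ\si_j} = U_{\al(1)}\cdots U_{\al(j-1)}[U_{\al(j)},U_{\al(j+1)}]U_{\al(j+2)}\cdots U_{\al(i)}$ bounds each kernel's normalised codimension by $\eta$ (since $\rank(XYZ)\le\rank(Y)$), and the standard factorisation of $S_i$ by adjacent transpositions shows that vanishing of all these $T_{\be,j}$ is equivalent to $R$-commutativity, with the telescoping chain $\cal U_{\be_0}(v)=\cal U_{\be_1}(v)=\cdots=\cal U_{\be_m}(v)$ making this precise. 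Both arguments are ultimately counting arguments producing $\eta$ of the same exponential order in $R$ (roughly $\eps/(R\,n^R)$ in your version versus $\eps/((n+1)^{R-2}n^2)$ in the paper's), so nothing is lost quantitatively; your version avoids the slightly delicate bookkeeping of the paper's inductive step and makes the dependence of $\eta$ on $R$ and $n$ fully explicit in one line.
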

\begin{proof}
For $k\in\N$ we let $\Bij(k)$ be the set of all bijections of the set $[k]$.
Let us prove by induction on $R$ that for every $\eps>0$ there exists $\eta>0$ such that if $d\in \N$ and $\cal M$ is an $\eta$-commuting $d$-matrix tuple, then
$$
    \dim \left( \bigcap_{\substack{\al\in \map(R, n) \\ \si\in\Bij(R)}} \ker(\cal M_\al -\cal M_{\al\circ \si})\right) \ge d(1-\eps).
$$

When $R=2$, we can set $\eta := \frac{\eps}{n^2}$. Indeed, if $\cal M$ is an 
$\eta$-commuting tuple then by definition for $i,j\in [n]$ we have $\dim(\ker([M_i,M_j])) \ge (1-\eta)d$. And so we have 
$$
    \dim \left( \bigcap_{i,j\in [n]} \ker([M_i,M_j])\right) \ge (1-n^2\eta)d = (1-\eps)d.
$$ 

Thus let us assume that we have shown the inductive statement for some $R$ and let us prove it for $R+1$. Let us fix $\eps>0$ and let $\eta$ be given by the inductive assumption for $\frac{\eps}{n+1}$. Thus given an $\eta$-commuting tuple $\cal M$ we obtain a subspace $W\subset \C^d$ such that $\dim(W) \ge d(1-\frac{\eps}{n+1})$ and $\cal M$ is $R$-commutative on $W$, i.e.~for any $w\in W$, any $\al\in \map( k, n)$ with $k\le R$ and any permutation $\si \colon [k] \to [k]$ we have 
$$
    \cal M_\al (w) = \cal M_{\al\circ \si} (w).
$$ 

Let us define $V := W \cap \bigcap_{i=1}^n M_i^{-1}(W)$. Clearly $\dim(V) \ge d(1-\eps)$.

Now let $\be \in \map (R+1, n)$, let $\tau\colon [R+1]\to [R+1]$ be a permutation, and let $v\in V$. Let $i\in [n]$ be such that for some $2\le j_1 \le R+1$ we have $\be(j_1) =i$ and for some $2 \le j_2 \le R+1$ we have $\beta(\tau(j_2)) = i$. We can find such $i$ because $R+1\ge 3$.

Since in particular $v\in W$, we can find $\ga\in \map(R,n)$ and a permutation  $\rho\colon [R]\to [R]$  such that 
$$
        \cal M_{\be}(v) = \cal M_{\ga}\cdot M_i (v)
$$
and
$$
        \cal M_{\be\circ \tau}(v) = \cal M_{\ga\circ \rho}\cdot M_i (v).
$$
Since $M_i(v) \in W$, we have
$$
    \cal M_{\ga\circ \rho}\cdot M_i (v) =\cal M_{\ga}\cdot M_i (v),
$$
which finishes the proof.
\end{proof}

\begin{definition}\label{def-pair}
If $r\in \N$, $\eps>0$, and $A$ and $B$ are subspaces of $\C^d$, then we say that $(A,B)$ is an $(r,\eps)$-pair for the $d$-tuple $\cal M=(M_1,M_2,\dots,M_n)$ if
\begin{enumerate}
\item $A\subset B$ and  $\dim(B/A) \le \eps\cdot d$, and 
\item for every $\al \in \maple rn$ and $v\in A$ we have $\cal M_\al (v) \in B$.
\end{enumerate}
\end{definition}

\begin{lemma}\label{lemma-complicated}
Let $\eps>0$, let $r,R,d\in \N$ with $n\le r<R$, and let $\cal M = \tuple Mn$ be a $\sta$-closed $d$-matrix tuple. Let $(A_1,B_1)$ be an $(R,\eps)$-pair and let $W = \bigoplus_{j=1}^k B(w_j, R_j)$ be an $R$-multi-ballspace for $\cal M$ contained in $B_1$. Furthermore let us assume that the ballspaces $B(w_j, R_j+r)$ are regular for all $j$. Finally, let $B_2$ be the orthogonal complement of $W$ in $B_1$. 

Then there exists a subspace $A_2 \subset A_1\cap B_2$ such that $(A_2, B_2)$ is an $(r, \eps+ \frac{n}{R}2^r)$-pair.

\end{lemma}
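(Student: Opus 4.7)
The approach is to construct $A_2$ explicitly. Set
$$
Y := \sum_{j=1}^{k} B_{\cal M}(w_j, R_j + r), \qquad A_2 := A_1 \cap Y^\perp.
$$
Since $W = \bigoplus_{j} B_{\cal M}(w_j, R_j) \subset Y$, we have $Y^\perp \subset W^\perp$, and hence $A_2 \subset A_1 \cap W^\perp \subset B_1 \cap W^\perp = B_2$; in particular $A_2 \subset A_1 \cap B_2$.

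To verify invariance, fix $v \in A_2$ and $\al \in \maple r n$. Since $r \le R$ and $v \in A_1$, the $(R,\eps)$-pair property of $(A_1, B_1)$ gives $\cal M_\al(v) \in B_1$. Using $\sta$-closure of $\cal M$, we can write $\cal M_\al^\ast = \cal M_\be$ for some $\be \in \map(r, n)$. By the definition of a ballspace, $\cal M_\be$ maps $B_{\cal M}(w_j, R_j)$ into $B_{\cal M}(w_j, R_j + r)$ (any length-$r$ word composed with a length-$\le R_j$ word gives a length-$\le R_j+r$ word), so $\cal M_\al^\ast(W) \subset Y$. For every $w \in W$ we then have
$$
\langle \cal M_\al(v), w \rangle = \langle v, \cal M_\al^\ast w \rangle = 0,
$$
because $v \perp Y$. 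Thus $\cal M_\al(v) \perp W$ and, combined with $\cal M_\al(v) \in B_1$, gives $\cal M_\al(v) \in B_2$.

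For the codimension bound, split
$$
\dim(B_2 / A_2) \le \dim(B_2 / (A_1 \cap B_2)) + \dim((A_1 \cap B_2) / A_2).
$$
The first summand is at most $\dim(B_1 / A_1) \le \eps d$ via the injection $B_2 / (A_1 \cap B_2) \hookrightarrow B_1 / A_1$. For the second, since $A_1 \cap B_2 \subset W^\perp$ and $Y = W \oplus (Y \cap W^\perp)$, orthogonal projection onto $Y \cap W^\perp$ has kernel exactly $A_2$ inside $A_1 \cap B_2$, so $\dim((A_1 \cap B_2) / A_2) \le \dim Y - \dim W$. Using the regularity of $B_{\cal M}(w_j, R_j + r)$ to identify its dimension with that of $F^{\fa_j}_{R_j + r}$ (and the restriction of the regularity isomorphism to $B_{\cal M}(w_j, R_j)$ with $F^{\fa_j}_{R_j}$), iterated application of Corollary~\ref{corymac} gives
$$
\dim F^{\fa_j}_{R_j + r} \le \prod_{i = R_j + 1}^{R_j + r} \left(1 + \frac{n}{i}\right) \dim F^{\fa_j}_{R_j} \le \left(1 + \frac{n}{R}\right)^{r} \dim F^{\fa_j}_{R_j}.
$$
Because $n \le r < R$ forces $n/R < 1$, the elementary binomial estimate $(1+x)^r - 1 \le x \cdot 2^r$ for $x \in [0,1]$ yields $\dim F^{\fa_j}_{R_j + r} - \dim F^{\fa_j}_{R_j} \le (n/R)\, 2^r \dim F^{\fa_j}_{R_j}$. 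Summing over $j$ and using $\dim W \le d$ gives $\dim Y - \dim W \le (n/R)\, 2^r d$, and hence the total bound $\eps + (n/R)\, 2^r$.

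The main conceptual step is the ``union-bound'' observation that $\bigcup_{\al \in \maple r n} \cal M_\al^\ast(W)$ is contained in a single subspace $Y$ independent of $\al$; this lets a single orthogonality condition $v \perp Y$ handle all $\al \in \maple r n$ at once and avoids a blow-up like $n^r$. The remaining technical work---transferring dimensions from ballspaces to filtered quotients via regularity, and converting the iterated Macaulay product into the clean factor $(n/R)\, 2^r$---is routine once $n/R < 1$ is in hand.
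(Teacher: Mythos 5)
Your proof is correct and follows essentially the same route as the paper: both take $A_2 = A_1 \cap Y^\perp$ where $Y$ is the span of the enlarged ballspaces $B_{\cal M}(w_j, R_j + r)$, both prove invariance by using $\sta$-closure to move $\cal M_\al$ to the other side of the inner product, and both bound the codimension by $\dim(B_1/A_1) + \dim Y - \dim W$ followed by Corollary~\ref{corymac}. Your version is slightly more explicit in two places the paper elides: you spell out the linear algebra behind the codimension bound via the orthogonal decomposition $Y = W \oplus (Y \cap W^\perp)$, and you note explicitly that $\cal M_\al(v) \in B_1$ (from the $(R,\eps)$-pair property) is needed in addition to $\cal M_\al(v) \perp W$ to conclude $\cal M_\al(v) \in B_2$. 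One small slip: for $\al \in \maple rn$ of length $q \le r$, the adjoint $\cal M_\al^\ast$ equals $\cal M_\be$ with $\be \in \map(q,n) \subset \maple rn$, not necessarily $\be \in \map(r,n)$; this does not affect the argument.
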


\begin{proof}
 Let $V\subset \C^d$ be the space spanned by the ballspaces $B(w_j, R_j+r)$, $j=1,\ldots, k$. Let $V^\perp\subset \C^d$ be the space orthogonal to $V$ and let $A_2 =A_1\cap V^\perp$.

Let us show that $\dim(B_2/A_2) \le d(\eps + \frac{n}{R}2^r)$. By basic linear algebra, it is easy to check that $\dim(B_2/A_2)$ is bounded from above by 
\begin{equation}\label{apa}
    \dim(B_1/A_1) + \dim(V) - \dim(W).
\end{equation}
We can bound \eqref{apa} by
$$
    \dim(B_1/A_1) + \sum_{j=1}^k \big(\dim (B(w_j,R_j+r)) - \dim( B(w_j,R_j))\big).
$$
By Corollary~\ref{corymac}, the quantity above is at most 
$$
    \eps\cdot d + \sum_{j=1}^k \dim (B(w_j,R_j))\left(\left( 1+ \frac{n}{R}\right)^r -1\right) \le \eps\cdot d + d \cdot \frac{n}{R} \cdot 2^r, 
$$
where we use the inequality $(1+x)^r \le 1+(2^r-1)x$, valid for $x\in [0,1]$ and $r\ge 1$.
 Therefore we obtain that  $\dim(B_2/A_2) \le d(\eps + \frac{n}{R}2^r)$.

Thus to finish the proof we only need to show that for  $x\in A_2$ and $\al \in \map(q,n)$ with $q\le r<R$ we have $\cal M_\al(x) \in B_2$. In other words, we need to show that if $x \in A_1$ is orthogonal to $B(w_j, R_j+r)$ then $\cal M_\al (x)$ is orthogonal to $B(w_j,R_j)$.

Indeed, let $w\in B(w_j,R_j)$. Since $\cal M$ is $\sta$-closed, we have $\cal M_\al^\ast(w) \in B(w_j,R_j+r)$ and hence
$$
    \langle \cal M_\al (x), w \rangle  = \langle x, \cal M_\al^\ast(w) \rangle = 0,
$$ 
finishing the proof.
\end{proof}

Let $\C\langle Y_1,\ldots, Y_n\rangle$ be the ring of polynomials in $n$ non-commuting variables, and let $\pi \colon \C\langle Y_1,\ldots, Y_n\rangle \to \Poly$ be the algebra homomorphism such that $\pi(Y_i) = X_i$. Let $c\colon \Poly \to \C\langle Y_1,\ldots, Y_n\rangle$ be the unique $\C$-linear map such that for any $\al \in \map(d,n)$ with $\al(1)\le \al(2)\le \ldots \le \al(d)$ we have 
$$
    c\circ\pi( Y_{\al(1)}\ldots Y_{\al(d)}) = Y_{\al(1)}\ldots Y_{\al(d)}.
$$

In other words, the map $c$ allows us to treat commutative polynomials as non-commutative ones, by fixing an order on the variables. Given a matrix tuple $\cal M = (M_1,\ldots, M_n)$ and $f\in \Poly$, we define $f(\cal M)$ to be the matrix $c(f)(M_1,\ldots, M_n)$.

Let us define a ${}^\ast$-operation on $\C\langle Y_1,\ldots Y_n\rangle$ in the following way. For any $\al \in \map(d,n)$ and $a\in \C$ we define
$$
   \left( a \cdot Y_{\al(1)}\cdot \ldots \cdot Y_{\al(d)} \right)^\ast := \ov{a} \cdot Y_{\al(d)}\cdot \ldots \cdot Y_{\al(1)},
$$
and we extend this definition to arbitrary elements of $\C\langle Y_1,\ldots Y_n\rangle$ by linearity.

For $f\in \Poly$ we define $f^\ast(\cal M) := (c(f)^\ast)(M_1,\ldots, M_n)$. The following simple observation will be used without reference.

\begin{lemma}
For any matrix tuple $\cal M$ and any $f\in \Poly$ we have that the matrices $f(\cal M)$  and $f^\ast(\cal M^\ast)$ are adjoint to each other. \qed
\end{lemma}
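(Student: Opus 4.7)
The plan is to reduce to the case where $f$ is a single monomial by $\C$-linearity, and then verify the equality $(f(\cal M))^\ast = f^\ast(\cal M^\ast)$ by a direct calculation, where $\cal M^\ast := (M_1^\ast,\ldots, M_n^\ast)$.

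First I would observe that the ordered monomials $X_{\al(1)}\cdots X_{\al(d)}$ with $d\in\N$, $\al\in \map(d,n)$ and $\al(1)\le\ldots\le\al(d)$ form a $\C$-basis of $\Poly$, since they are precisely the images under $\pi$ of the standard ordered-monomial basis of $\C\langle Y_1,\ldots, Y_n\rangle$. Both assignments $f\mapsto (f(\cal M))^\ast$ and $f\mapsto f^\ast(\cal M^\ast)$ are \emph{conjugate}-$\C$-linear in $f$: the former because $f\mapsto f(\cal M)$ is $\C$-linear (since $c$ is $\C$-linear and substitution into a fixed non-commutative polynomial is linear) and adjunction on matrices is conjugate-linear; the latter because the $\ast$-operation on $\C\langle Y_1,\ldots,Y_n\rangle$ is defined to conjugate scalar coefficients, so $f\mapsto c(f)^\ast$ is already conjugate-linear, and substituting $\cal M^\ast$ into a non-commutative polynomial is then $\C$-linear in that polynomial. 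Hence it suffices to prove the lemma for ordered monomials.

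Next I would fix such a monomial $f:=X_{\al(1)}\cdots X_{\al(d)}$ with $\al$ non-decreasing. By the defining property of $c$, one has $c(f)=Y_{\al(1)}\cdots Y_{\al(d)}$, so $f(\cal M)=M_{\al(1)}\cdots M_{\al(d)}$; the identity $(AB)^\ast = B^\ast A^\ast$ then gives
$$
    (f(\cal M))^\ast = M_{\al(d)}^\ast\cdots M_{\al(1)}^\ast.
$$
On the other hand, the definition of the $\ast$-operation on $\C\langle Y_1,\ldots,Y_n\rangle$ gives $c(f)^\ast = Y_{\al(d)}\cdots Y_{\al(1)}$, and substituting the tuple $\cal M^\ast$ yields
$$
    f^\ast(\cal M^\ast) = M_{\al(d)}^\ast\cdots M_{\al(1)}^\ast,
$$
which matches $(f(\cal M))^\ast$.

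There is no real obstacle here; the only subtlety is bookkeeping the complex conjugation of scalar coefficients in the linearity reduction. This is precisely why the $\ast$-operation on $\C\langle Y_1,\ldots,Y_n\rangle$ was defined to conjugate coefficients: it ensures that the two conjugate-$\C$-linear maps on $\Poly$ agree, since they agree on the ordered-monomial basis.
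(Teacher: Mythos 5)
The paper states this lemma with no proof (it is marked \qed directly), treating it as an immediate consequence of the definitions; your argument is the natural, correct verification. The conjugate-linearity reduction to ordered monomials and the subsequent direct check using $(AB)^\ast=B^\ast A^\ast$ are exactly right, and you correctly identify that the conjugation of coefficients in the definition of $\ast$ on $\C\langle Y_1,\ldots,Y_n\rangle$ is what makes the two conjugate-linear maps agree.
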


We will also need the following lemma.

\begin{lemma}\label{lemma_simple2}
Let $f\in \Poly$ and let $k\in \N$. Let $\cal M$ be a $\sta$-closed $d$-matrix tuple and let $x\in \C^d$ be such that $\cal M$ is $(2k \deg(f))$-commutative at $x$. Then
$$
    \left[f^\ast(\cal M^\ast) f(\cal M)\right]^k (x) = \left[f^\ast(\cal M^\ast)\right]^k\left[f(\cal M)\right]^k(x)
$$
\end{lemma}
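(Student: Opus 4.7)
The plan is to expand both sides as $\C$-linear combinations of ``monomial'' operators of the form $M_{i_1}\cdots M_{i_s}$ (once every occurrence of $M_j^\ast$ is rewritten as some $M_{\pi(j)}$, which we can do because $\cal M$ is $\sta$-closed), observe that each monomial appearing has length at most $2k\deg(f)$, and then use the $(2k\deg f)$-commutativity at $x$ to match terms across the two sides.

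Concretely, first write $c(f)=\sum_\al c_\al Y_{\al(1)}\cdots Y_{\al(|\al|)}$, where the sum ranges over weakly increasing tuples $\al$ of length at most $\deg(f)$, so that
$$
f(\cal M)=\sum_\al c_\al\, M_{\al(1)}\cdots M_{\al(|\al|)},\qquad f^\ast(\cal M^\ast)=\sum_\al \overline{c_\al}\, M_{\al(|\al|)}^\ast\cdots M_{\al(1)}^\ast.
$$
Fully expanding the left-hand side gives
$$
\sum_{\al_1,\be_1,\ldots,\al_k,\be_k}\Bigl(\prod_{j=1}^k\overline{c_{\al_j}}c_{\be_j}\Bigr)\,T^L_{\al_1,\be_1,\ldots,\al_k,\be_k}(x),
$$
where $T^L$ is the product $M^\ast_{\al_k(|\al_k|)}\cdots M^\ast_{\al_k(1)} M_{\be_k(1)}\cdots M_{\be_k(|\be_k|)}\cdots M^\ast_{\al_1(|\al_1|)}\cdots M^\ast_{\al_1(1)} M_{\be_1(1)}\cdots M_{\be_1(|\be_1|)}$. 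Expanding the right-hand side similarly yields exactly the same index sum, the same coefficients $\prod_j\overline{c_{\al_j}}c_{\be_j}$, but with the monomial $T^R_{\al_1,\be_1,\ldots,\al_k,\be_k}$ in which all $M^\ast$-factors are collected on the left and all $M$-factors on the right.

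Now the key step: since $\cal M$ is $\sta$-closed, there is a permutation $\pi\colon [n]\to[n]$ with $M_i^\ast = M_{\pi(i)}$, so after substitution each $T^L$ and each $T^R$ becomes a product $M_{i_1}M_{i_2}\cdots M_{i_s}$ with $s=\sum_{j=1}^k(|\al_j|+|\be_j|)\le 2k\deg(f)$. Moreover, for any fixed index data $(\al_1,\be_1,\ldots,\al_k,\be_k)$, the corresponding $T^L$ and $T^R$ use exactly the same multiset of factors from $\{M_1,\ldots,M_n\}$ --- they differ only by a permutation of the order in which those factors are composed. By the hypothesis that $\cal M$ is $(2k\deg f)$-commutative at $x$, each such pair satisfies $T^L(x)=T^R(x)$, so the two sums agree term by term.

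The only thing one needs to be careful about is the bookkeeping of how $M^\ast_{\al(|\al|)}\cdots M^\ast_{\al(1)}$ translates, via $\sta$-closedness, into a monomial in the $M_i$'s; but this is just a substitution and does not change which letters appear, only their order. Thus no further computation is needed: $r$-commutativity at $x$ (with $r=2k\deg f$) is precisely the statement we need to pass from ``alternating $f^\ast f$ product'' to ``all $f^\ast$ then all $f$''. The main (and only real) obstacle is notational clarity in writing out the expansion; there is no substantive mathematical difficulty beyond invoking the definitions of $\sta$-closedness and $r$-commutativity.
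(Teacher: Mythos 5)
Your proof is correct and is essentially the same argument as the paper's, just unfolded more explicitly. The paper phrases it by first replacing $f^\ast(\cal M^\ast)$ with $\bar f(\cal M^\ast)$ (where $\bar f$ denotes $f$ with conjugated coefficients) and then invoking $(2k\deg f)$-commutativity once to rearrange the resulting degree-$\le 2k\deg f$ polynomial in the $M_i$'s; your version spells out the monomial-by-monomial expansion of both sides, the substitution $M_i^\ast = M_{\pi(i)}$ coming from $\sta$-closedness, and the observation that corresponding monomials $T^L$ and $T^R$ differ only by a permutation of their factors, so they agree on $x$. Same idea, same use of the two hypotheses, just with the bookkeeping made visible.
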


\begin{proof}
For $f\in \Poly$ let us define $\bar f$ to be the polynomial which arises from $f$ by conjugating the coefficients.  Then the left-hand side is equal to
$$
    \left[\bar f(\cal M^\ast) f(\cal M)\right]^k(x),
$$
and the right-hand side is equal to 
$$
\left[\bar f (\cal M^\ast)\right]^k\left[f(\cal M)\right]^k(x).
$$
These two expressions are clearly equal if $\cal M$ is $(2k \deg(f))$-commutative at $x$.
\end{proof}

Recall that $K(R)$ is a function defined in Theorem~\ref{cory-effective}. 

\begin{lemma}\label{lemma-reg}
Let $d\in \Np$ and let $\cal M = (M_1,\ldots, M_n)$ be a $\sta$-closed $d$-matrix tuple. Let $R\ge 0$ and let $v\in \C^d$ be such that $\cal M$ is $(2 K(R))$-commutative at $v$. Then the ballspace $B(v, R)$ is regular.
\end{lemma}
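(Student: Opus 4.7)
The plan is to manufacture both the ideal $\fa$ and the regularity isomorphism from the evaluation map at $v$. Set
$$V := \{f\in \Poly : \deg f \le R,\ f(\cal M)(v)=0\},$$
let $\fa$ be the ideal of $\Poly$ generated by $V$, and define $\phi_0\colon F^\fa_R \to B(v,R)$ by $\phi_0(f+\fa) := f(\cal M)(v)$ on any representative of degree $\le R$; the isomorphism in the definition of regularity will then be $\phi := \phi_0^{-1}$. Surjectivity of $\phi_0$ and the intertwining condition are both easy once one uses $R$-commutativity at $v$ (which follows from $2K(R)$-commutativity since $K(R) \ge R$) to reorder variables: every generator $\cal M_\al(v)$ of $B(v,R)$ can be rewritten as $c$ applied to a sorted monomial, and for $g$ of degree $\le R-1$ the two non-commutative lifts $Y_i\cdot c(g)$ and $c(X_i g)$ of $X_i g$ agree at $v$, so $M_i(g(\cal M)(v)) = (X_i g)(\cal M)(v)$.

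Well-definedness of $\phi_0$ is the first place commutative algebra enters: given $f_1, f_2$ of degree $\le R$ with $f_1 - f_2 \in \fa$, the plan is to use Theorem~\ref{cory-effective}(1) to write $f_1 - f_2 = \sum h_j f_j$ with $f_j \in V$ and $\deg(h_j f_j) \le K(R)$, so each $(h_j f_j)(\cal M)(v)$ factors through $f_j(\cal M)(v) = 0$ by the $2K(R)$-commutativity hypothesis. Injectivity is immediate from the definition of $V$.

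The main obstacle is verifying condition~(2) of regularity, i.e.\ $F^\fa_R \cap (\rad(\fa)/\fa) = \{0\}$; this is where the hypothesis that $\cal M$ is $\sta$-closed will be used, via Lemma~\ref{lemma_simple2}. Given $f$ with $\deg f \le R$ and $f \in \rad(\fa)$, Theorem~\ref{cory-effective}(2) will produce $m\in \N$ and $g_i$ with $f^m = \sum g_i f_i$ and $\deg(g_i f_i) \le K(R)$; in particular $m\deg(f) \le K(R)$. The argument of the previous paragraph then gives $(f^m)(\cal M)(v) = 0$, and since $c(f^m)$ and $c(f)^m$ are two non-commutative lifts of $f^m$ of degree $m\deg(f) \le K(R)$, the commutativity hypothesis at $v$ yields $[f(\cal M)]^m(v) = 0$. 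Setting $A := f(\cal M)$ and $B := A^\ast A = f^\ast(\cal M^\ast) f(\cal M)$, Lemma~\ref{lemma_simple2} with $k = m$ is applicable since $2m\deg(f) \le 2K(R)$, and gives $B^m(v) = [f^\ast(\cal M^\ast)]^m [f(\cal M)]^m(v) = 0$. Because $B$ is Hermitian and positive semi-definite, $B^m(v) = 0$ forces $B(v) = 0$, so $\|A(v)\|^2 = \langle B(v), v\rangle = 0$ and $f(\cal M)(v) = 0$. Hence $f \in V \subset \fa$, as required.
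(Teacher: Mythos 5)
Your proof is correct and follows the same route as the paper's: the ideal $\fa$ is generated by the same set of polynomials $V = P$ of degree $\le R$ killing $v$, the isomorphism is (the inverse of) the same evaluation map, well-definedness and injectivity use Theorem~\ref{cory-effective}(1) together with $K(R)$-commutativity in exactly the same way, and the nilpotency condition $F^\fa_R \cap (\rad(\fa)/\fa) = \{0\}$ is obtained from Theorem~\ref{cory-effective}(2), $\sta$-closedness, and Lemma~\ref{lemma_simple2}.

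The one place you deviate is the last step: after reaching $B^m(v) = 0$ with $B := f^\ast(\cal M^\ast) f(\cal M)$, the paper takes the minimal $t$ with $B^t(v)=0$ and runs a two-case (even/odd) descent, invoking Lemma~\ref{lemma_simple2} again inside the descent, to force $t=1$. You instead cite outright the standard fact that a Hermitian positive semi-definite matrix $B$ with $B^m(v)=0$ must have $B(v)=0$; this is cleaner and needs Lemma~\ref{lemma_simple2} only once (to pass from $f(\cal M)^m(v)=0$ to $B^m(v)=0$). The paper's descent is really just an in-line proof of that same PSD fact, so nothing is lost, and your version avoids some redundant work. One small point worth making explicit in a final write-up: you use $K(R) \ge R$ (to get $R$-commutativity from $2K(R)$-commutativity) and $m\deg(f) = \deg(f^m) \le K(R)$ (to make Lemma~\ref{lemma_simple2} applicable with $k=m$); both are fine, the former because $K$ can be taken increasing and at least the identity, the latter because $f^m$ is a sum of polynomials of degree at most $K(R)$.
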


\begin{proof}
Given $\al \in \map(q,n)$, we define $X_\al \in \Poly$ to be the monomial
$$
    X_\al := X_{\al(1)}\ldots X_{\al(q)}.
$$

Let $P\subset \Poly$ be defined as follows. We let $f\in P$ if and only if $\deg(f) \le R$  and $f(\cal M)(v) =0$. Let $\fa$ be the ideal generated by $P$. Let us define a map $\phi \colon B(v,R) \to  \Poly/\fa$ as follows: 
$$
    \phi( \cal M_\al (v)) := X_{\al}+\fa.
$$

Let us check that $\phi$ is well-defined. For this let us assume that 
$$
 \sum_{\al \in \maple R n} s_\al \cal M_\al(v) =  \sum_{\al \in \maple R n } t_\al \cal M_\al(v),
$$
where $s_\al, t_\al \in \C$. But then 
$$
\sum_{\al \in \maple R n } (s_\al - t_\al) M_\al(v) = 0,
$$
and therefore $\sum_{\al \in \maple R n } (s_\al-t_\al) X_\al \in P$. In particular we get that
$$
\sum_{\al \in \maple R n} s_\al X_\al + \fa =  \sum_{\al \in \maple R n } t_\al X_\al + \fa,
$$
which shows that $\phi$ is well-defined.

Now let us see that $\phi$ is injective. Indeed suppose that 
$$
\phi \left( \sum_{\al} s_\al \cal M_\al(v) \right) = 0,
$$
where $\al$ runs through the elements of $\maple R n$, and $s_\al \in \C$. Then \\ $\sum_\al s_\al X_\al \in \fa$, and so we can find  $f_i \in P$  and $h_i \in \Poly$ with $\deg(h_if_i) \le K(R)$ such that 
$$
\sum_{i=1}^k h_if_i  = \sum_\al s_\al X_\al.
$$

But since $\cal M$ is $K(R)$-commutative at $v$, we have
$$
\sum_{i=1}^k h_i(\cal M) f_i(\cal M)(v)  = \sum_\al s_\al \cal M_\al (v).
$$

The left-hand side is equal to $0$ since $f_i\in P$, and so we see that \\  $\sum_{\al} s_\al \cal M_\al (v) = 0$. This finishes the proof of injectivity of $\phi$.

Since clearly the image of $\phi$ is equal to $F^\fa_R$, it remains to prove that 
$$
F^\fa_R \cap (\rad(\fa)/\fa) = \{0+\fa\}.
$$

By Theorem~\ref{cory-effective}, if $f \in \Poly$ is such that $\deg(f) \le R$ and $f\in \rad(\fa)$ then we can find $m \in\N$, elements $f_i \in P$ and $g_i\in \Poly$ with  $\deg(g_if_i) \le K(R)$, such that 
$$
f^m = \sum_{i=1}^k g_i f_i,
$$

Since $\cal M$ is $K(R)$-commutative at $v$, we have
$$
    0 = \sum_{i=1}^k g_i(\cal M) f_i(\cal M)(v) = f(\cal M)^m(v),
$$
and so by $2K(R)$-commutativity, and since $R\le K(R)$, we also have
$$
0 =f(\cal M)^m(v) = f^\ast(\cal M^\ast) ^m f(\cal M)^m(v) = \left(f^\ast(\cal M^\ast)f(\cal M)\right)^m (v).
$$

In particular we can define $t$ to be the smallest positive integer such that 
$$
    \left(f^\ast(\cal M^\ast)f(\cal M)\right)^t (v) = 0.
$$

We will show $t=1$. By way of contradiction, let us consider two cases: first let us assume that $t$ is even and equal to $2l$ for some $l\ge 1$. 

By Lemma~\ref{lemma_simple2}, we have
\begin{equation}\label{eq-adjoint}
    0 = f^\ast(\cal M^\ast)^{2l} f(\cal M)^{2l}(v) = (f^\ast(\cal M^\ast)f(\cal M))^{2l}(v). 
\end{equation}
Therefore, we also have
\begin{align*}
    0 &= \langle (f^\ast(\cal M^\ast) f(\cal M))^{2l}(v) , v\rangle \\
      &= \langle (f^\ast(\cal M^\ast) f(\cal M))^{l}(v), (f^\ast(\cal M^\ast) f(\cal M))^{l}(v) \rangle.
\end{align*}
This shows that $(f^\ast(\cal M^\ast) f(\cal M))^{l}(v) = 0$, which contradicts the minimality of $t$. 

In the second case, let us assume that $t$ is odd and equal to $2l+1$ for some $l\ge 1$. We proceed in a similar fashion. By Lemma~\ref{lemma_simple2} we have that $\left( f^\ast(\cal M^\ast) f(\cal M)\right)^{2l+1}(v) = 0$. Hence, we also have
\begin{align*}
  0 &= \langle \left( f^\ast(\cal M^\ast) f(\cal M)\right)^{2l+1}(v), f^\ast(\cal M^\ast) f(\cal M)(v)\rangle \\
    &= \langle \left( f^\ast(\cal M^\ast) f(\cal M)\right)^{l+1}(v), \left( f^\ast(\cal M^\ast) f(\cal M)\right)^{l+1}(v) \rangle,
\end{align*}
and since $l+1< t$, we obtain a contradiction exactly as in the first case.

Thus all in all we have showed that $f^\ast(\cal M^\ast)f(\cal M) (v) = 0$. Since $f^\ast(\cal M^\ast)$ and $f(\cal M)$ are adjoint to each other, we also have that $f(\cal M)(v) = 0$. This shows that $f\in P$, and hence $f\in \fa$, which finishes the proof.
\end{proof}

\begin{lemma}\label{lemma-bootstrap}
Let $R\in \N$, $d\in \N$, let $\cal M$ be a $\sta$-closed $d$-matrix tuple, let $A\subset \C^d$, and let us assume that $\cal M$ is $2K(2R)$-commutative on $A$. Then there exist $k\in \N$ and $w_1,\ldots, w_k\in A$ such that the $R$-ballspaces $B_{\cal M}(w_i,R)$ are regular, 
pairwise orthogonal, and we have that
\begin{equation}\label{todo6}
    \sum_{i=1}^k \dim\big(B_{\cal M} (w_i, R)\big)  \ge  \frac{1}{e^n}\cdot \dim(A),
\end{equation}
where $e=2.71...$.
\end{lemma}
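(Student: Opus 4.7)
The plan is to apply a greedy ``Ornstein--Weiss'' type construction: select the roots $w_1,w_2,\ldots$ one at a time, and use $\sta$-closedness of $\cal M$ to upgrade ``disjointness in the $2R$-ballspace sense'' into genuine orthogonality of the $R$-ballspaces. As a first observation, since $\cal M$ is $2K(2R)$-commutative on $A$ and $K$ is increasing, Lemma~\ref{lemma-reg} applied with $R$ replaced by $2R$ shows that for every $v\in A$ the ballspace $B_{\cal M}(v,2R)$ is regular, witnessed by some ideal $\fa=\fa(v)$ and an isomorphism $\phi\colon B_{\cal M}(v,2R)\to F^\fa_{2R}$. The restriction of $\phi$ to $B_{\cal M}(v,R)$ is an isomorphism onto $F^\fa_R$, and both defining conditions of regularity are inherited, so the $R$-ballspace $B_{\cal M}(v,R)$ is itself regular.

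The greedy construction goes as follows: set $A_0:=A$; at step $i$, if $A_i\ne\{0\}$, pick any nonzero $w_{i+1}\in A_i$ and let
$$
A_{i+1}:=A_i\cap B_{\cal M}(w_{i+1},2R)^\perp.
$$
Since $w_{i+1}\in B_{\cal M}(w_{i+1},2R)\cap A_i$, the dimension strictly drops, so the procedure terminates with some $A_k=\{0\}$. The step I expect to be the main obstacle is verifying pairwise orthogonality of the resulting $R$-ballspaces: for $j>i$ we have $w_j\in A_{j-1}\subset B_{\cal M}(w_i,2R)^\perp$, and for $\al,\be\in\maple Rn$ the $\sta$-closedness of $\cal M$ forces $\cal M_\al^\ast$ to equal $\cal M_{\al'}$ for some $\al'$ with $|\al'|=|\al|\le R$; hence $\cal M_\al^\ast\cal M_\be(w_i)\in B_{\cal M}(w_i,2R)$, which gives
$$
\langle \cal M_\al(w_j),\cal M_\be(w_i)\rangle = \langle w_j,\cal M_\al^\ast\cal M_\be(w_i)\rangle = 0.
$$
This is the only place where the $\sta$-closed hypothesis is used, and it turns the weak disjointness built into the greedy choice into actual orthogonality.

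For the quantitative estimate \eqref{todo6} I would combine Macaulay's inequality with a telescoping argument. Applying Corollary~\ref{corymac} repeatedly along the chain $F^\fa_R\subset F^\fa_{R+1}\subset\cdots\subset F^\fa_{2R}$ gives
$$
\dim B_{\cal M}(w_{i+1},2R)\le \dim B_{\cal M}(w_{i+1},R)\prod_{j=R+1}^{2R}\Bigl(1+\frac{n}{j}\Bigr)\le e^n\dim B_{\cal M}(w_{i+1},R),
$$
where the last inequality uses $\sum_{j=R+1}^{2R}1/j\le R/(R+1)<1$. Since $\dim A_i-\dim A_{i+1}\le\dim B_{\cal M}(w_{i+1},2R)$, telescoping $\sum_{i=0}^{k-1}(\dim A_i-\dim A_{i+1})=\dim A$ yields $\dim A\le e^n\sum_{i=1}^k\dim B_{\cal M}(w_i,R)$, which is precisely the inequality \eqref{todo6}.
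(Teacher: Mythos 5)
Your proof is correct and takes essentially the same approach as the paper: both use Lemma~\ref{lemma-reg} for regularity, $\sta$-closedness to upgrade orthogonality from the $2R$-ballspace to the $R$-ballspaces, and the Macaulay bound (Corollary~\ref{corymac}) for the $e^n$ factor. The only cosmetic difference is that you run a greedy construction (repeatedly removing $B_{\cal M}(w,2R)^\perp$ and telescoping dimensions), whereas the paper picks a family of roots maximizing $\sum\dim B_{\cal M}(w_i,R)$ and shows $A$ is contained in the span of the $2R$-ballspaces; the two formulations yield the same inequality $\dim A\le\sum\dim B_{\cal M}(w_i,2R)$ by nearly identical reasoning.
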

\begin{proof}
Note that by Lemma~\ref{lemma-reg} all $R$-ballspaces with roots in $A$ are regular. Let us consider the subset $Q$ of $A^{\oplus\N}$ which consists of those tuples $(w_1,\ldots, w_k)$ such that the ballspaces $B_{\cal M}(w_1,R), \ldots, B_{\cal M}( w_k,R)$ are pairwise orthogonal to each other.

Let $(w_1,\ldots, w_k)\in Q$ be a tuple for which the number
$$
    \sum_{i=1}^k \dim(B_{\cal M}(w_i,R))
$$
is maximal. By Lemma~\ref{lemma-reg}, it is enough to show that $\sum_{i=1}^k \dim(B_{\cal M}(w_i,R)) \ge \frac{1}{e^n}\cdot \dim (A)$.  Consider the vector space $V$ spanned 
by the ballspaces $B_{\cal M}(w_i,2R)$. By Lemma~\ref{lemma-reg}, the ballspaces $B_{\cal M}(w_i,2R)$ are regular, and so by Corollary~\ref{corymac} we have that
\begin{multline*}
    \dim (B_{\cal M}(w_i, 2R)) \le \\
\le   \left(1 +\frac{n}{2R}\right)\left(1 +\frac{n}{2R-1}\right)
        \ldots \left(1 + \frac{n}{R+1}\right) \dim( B_{\cal M}(w_i, R)),
\end{multline*}
which easily implies that
$$
    \dim( B_{\cal M}(w_i, 2R)) \le e^n \dim( B_{\cal M} (w_i, R)).
$$
This shows that 
\begin{equation}\label{done43}
    \dim(V)  \le e^n \dim\left(\bigoplus_{i=1}^k B_{\cal M} (w_i, R)\right).
\end{equation}

Let us observe that if $x\in A$ is orthogonal to $V$ then $B_{\cal M}(x,R)$ is orthogonal to the space
$\bigoplus_{i=1}^k B_{\cal M} (w_i, R)$. Indeed, since $\cal M$ is $\sta$-closed, for any $\al \in \maple R n$ and any $w\in B_{\cal M}(w_i,R)$ we have that $\cal M_\al^\ast(w)\in B_{\cal M}(w_i,2R)$. It follows that 
$$
    \langle \cal M_\al(x),w\rangle = \langle x , \cal M_\al^\ast (w) \rangle = 0.
$$

But by the maximality of $(w_1,\ldots, w_k)$, the above shows that there are no points in $A$ orthogonal to $V$, so in fact we have $A\subset V$. In particular, we have $\dim(V) \ge \dim( A)$, and hence by \eqref{done43} we have 
$$
    \dim( A) \le e^n \dim\left(\bigoplus_{i=1}^k B_{\cal M} (w_i, R)\right),
$$  
finishing the proof.
\end{proof}

The final lemma which we need for the proof of Theorem~\ref{t1} is an "Ornstein-Weiss type" lemma.

\begin{lemma}\label{lemma-final}
For every $\de>0$ and $r\in \N$ there exists $\eta>0$ such that 
if $d \in \N$,  and $\cal M$ is an $\eta$-commuting $\sta$-closed $d$-matrix tuple, then there exists an $r$-multiball $W\subset \C^d$ for $\cal M$, such that $\dim(W) \ge (1-\de)\cdot d$.
\end{lemma}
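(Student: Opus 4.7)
The proof strategy is an Ornstein--Weiss style iteration: I iteratively select pairwise orthogonal regular ballspaces, each step capturing a constant fraction of what remains, so that after enough rounds only a $\delta$-portion of $\C^d$ is uncovered.

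\textbf{Setup.} Given $\delta>0$ and $r\in\N$ (we may assume $r\ge n$, otherwise replace $r$ by $n$), choose $N\in\N$ so that $(1-e^{-n})^N<\delta/2$, together with a strictly decreasing sequence of integers $R_1>R_2>\ldots>R_N=r$ where the $R_i$ grow fast enough as $i$ decreases that $\sum_{i=1}^{N-1}\frac{n}{R_i}2^{R_{i+1}}<\delta/4$. Set $C:=2K(2R_1)$, with $K$ the Hermann function from Theorem~\ref{cory-effective}. Use Lemma~\ref{lemma-pair-reduction} with parameters $R:=C$ and $\eps:=\delta/4$ to pick $\eta>0$ so that any $\eta$-commuting $\sta$-closed tuple $\cal M$ admits a subspace $A_1\subset\C^d$ with $\dim A_1\ge (1-\delta/4)d$ on which $\cal M$ is $C$-commutative. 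Setting $B_1:=\C^d$, the pair $(A_1,B_1)$ is an $(R_1,\delta/4)$-pair in the sense of Definition~\ref{def-pair}.

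\textbf{Iteration.} Suppose at step $i$ we have an $(R_i,\eps_i)$-pair $(A_i,B_i)$ with $B_i$ orthogonal to every ballspace already chosen and with $\cal M$ still $C$-commutative on $A_i\subset A_1$. Apply Lemma~\ref{lemma-bootstrap} to $A_i$ with radius $R_i$ (valid since $2K(2R_i)\le C$) to obtain a direct sum $W_i=\bigoplus_{j}B(w_{i,j},R_i)$ of regular pairwise orthogonal $R_i$-ballspaces with roots $w_{i,j}\in A_i$ and total dimension at least $e^{-n}\dim A_i$. Each summand lies inside $B_i$ because $R_i$ matches the pair parameter. Now invoke Lemma~\ref{lemma-complicated} with $R:=R_i$, $r:=R_{i+1}$, and multi-ballspace $W_i$; the extra regularity assumption on $B(w_{i,j},R_i+R_{i+1})$ holds by Lemma~\ref{lemma-reg}, since $w_{i,j}\in A_1$ and $2K(R_i+R_{i+1})\le 2K(2R_1)=C$. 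This yields an $(R_{i+1},\eps_{i+1})$-pair $(A_{i+1},B_{i+1})$ with $B_{i+1}$ the orthogonal complement of $W_i$ in $B_i$ and $\eps_{i+1}=\eps_i+\frac{n}{R_i}2^{R_{i+1}}$; hence $\eps_i\le \delta/2$ throughout.

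\textbf{Conclusion.} Set $W:=\bigoplus_{i=1}^N W_i$. The summands are genuinely mutually orthogonal because $W_{i'}\subset B_{i'}\subset B_{i+1}$ is orthogonal to $W_i$ whenever $i'>i$, so $W$ is a direct sum of regular ballspaces of radii $R_i\ge r$, i.e.\ an $r$-multi-ballspace. The recursion $\dim B_{i+1}\le (1-e^{-n})\dim B_i+e^{-n}\eps_i d$, iterated $N$ times, yields $\dim B_{N+1}\le (1-e^{-n})^N d+(\max_i \eps_i) d\le \delta d$, and therefore $\dim W=d-\dim B_{N+1}\ge (1-\delta)d$.

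The main obstacle is the coordination of parameters: the Hermann function $K$ of Theorem~\ref{cory-effective} grows extremely rapidly, so the commutativity degree $C=2K(2R_1)$ demanded on $A_1$ may be astronomical, forcing $\eta$ to be minuscule. However $C$ depends only on the fixed data $n$, $r$ and $\delta$, so Lemma~\ref{lemma-pair-reduction} supplies the required $\eta$ and the argument closes.
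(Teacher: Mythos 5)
Your proof is correct and follows the same strategy as the paper's: use Lemma~\ref{lemma-pair-reduction} to get a large subspace with a sufficiently high commutativity degree, then iterate Lemma~\ref{lemma-bootstrap} (to grab a constant fraction of the remaining space as orthogonal regular ballspaces) and Lemma~\ref{lemma-complicated} (to pass to the orthogonal complement as a new pair), with the geometric decay of the complement handled by a choice of $N$ and the accumulation of the error terms $\frac{n}{R_i}2^{R_{i+1}}$ controlled by choosing the radii to grow fast. The only cosmetic difference is the phasing of the two lemmas within each iteration step and a slightly tighter commutativity constant ($2K(2R_1)$ versus the paper's $2K(2(r_0+r_1))$, both of which suffice).
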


\begin{proof} Let us fix $\de>0$ and $r\in \N$. Let us first fix  a positive integer $k$ such that $k(1-\frac{1}{e^n})^k < \frac{\de}{4}$, and then let us choose $\eps \in (0,\min(\frac{\de}{4}, \frac{\de}{2k},\frac{1}{e^n}))$ and natural numbers $r_0> r_1>\ldots > r_k = r$ such that  for $i=0,\ldots, k-1$ we have
$$
\eps +\frac{n}{r_i} 2^{r_{i+1}} < (1-\frac{1}{e^n})^k.
$$

By Lemma~\ref{lemma-pair-reduction}, we can fix $\eta$ to be such that if $\cal M$ is $\eta$-commuting then there exists a subspace $S\subset \C^d$ such  that $\dim(S) \ge (1-\eps)d$ and $\cal M$ is $2K(2(r_0+r_1))$-commutative on $S$. 

Let $\bar d := (1-\eps)d$. We will prove by induction on $i$ the following statement: for every $i=1,\ldots, k$ there exists $g(i)\in \N$, roots $w_1,\ldots, w_{g(i)}\in S$ and radii $R_1,\ldots, R_{g(i)}$ with $r_i \le R_j \le r_0$ for all $j= 1,\ldots,g(i)$, such that the balls $B_{\cal M} (w_j, R_j)$ are pairwise orthogonal and 
$$
\sum_{j=1}^{g(i)} \dim (B_{\cal M} (w_j,R_j)) \ge \bar d\left(1- i(1-\frac{1}{e^n})^i\right) -d \cdot \frac{i\de}{2k}.
$$
This will be enough to  finish the proof because for $i=k$ the right hand side above is equal to
$$
(1-\eps)d (1-k(1-\frac{1}{e^n})^k) - d \frac{\de}{2} > (1-\frac{\de}{4})(1-\frac{\de}{4})\cdot d -\frac{\de}{2} d  > (1-\de)\cdot d
$$


For $i=1$ the inductive claim is implied by Lemma~\ref{lemma-bootstrap}. Suppose that the inductive claim holds for some $i\in \{1,\ldots,k-1\}$ and let us prove it for $i+1$.

Let $W_i = \oplus_{j=1}^{g(i)} B_{\cal M} (w_j,R_j)$ and let $W_i^\perp$ be the orthogonal complement of $W_i$ in $\C^d$. Since $\cal M$ is $2K(2(r_0+r_{i+1}))$-commutative on $S$, we have that all the ballspaces 
$$
B_{\cal M} (w_j, R_j + r_{i+1})
$$ are regular, 
and so we can apply Lemma~\ref{lemma-complicated} for the $(r_i,\eps)$-pair $(S, \C^d)$ and the $r_i$-multiballspace $W_i$.

As a result we obtain a subspace $S_i \subset S\cap W_i^\perp$ such that $(S_i,W_i^\perp)$ is an $(r_{i+1}, \eps + \frac{n}{r_i}2^{r_{i+1}})$-pair. 

Now, by Lemma~\ref{lemma-bootstrap} we obtain $g(i+1) \in \N$ and roots $w_{g(i)+1}, w_{g(i)+2},\ldots, \\ w_{g(i+1)}   \in S_i$, such that the ballspaces $B_{\cal M}(w_{g(i)+s}, r_{i+1})$, $s=1,\ldots, g(i+1)-g(i)$, are regular, pairwise orthogonal, and 
$$
    \sum_{s=1}^{g(i+1)-g(i)} \dim( B_{\cal M}(w_{g(i)+s}, r_{i+1})) \ge \frac{1}{e^n}\dim(S_i).
$$ 
Since $(S_i,W_i^\perp)$ is an $(r_{i+1}, \eps + \frac{n}{r_i}2^{r_{i+1}})$-pair, we have 
$$
\dim (S_i) \ge \dim( W_i^\perp) - d(\eps + \frac{n}{r_i}2^{r_{i+1}}) 
$$
and so 
\begin{align*}
    \sum_{s=1}^{g(i+1)-g(i)} \dim( B_{\cal M}(w_{g(i)+s}, r_{i+1}))  &\ge \frac{1}{e^n} \dim (W_i^\perp) - \frac{d(1-\frac{1}{e^n})^k}{e^n}\\
    &\ge  \frac{1}{e^n} \dim( W_i^\perp) - \bar d(1-\frac{1}{e^n})^{i+1}.
\end{align*}
Therefore we have also
\begin{align*}
\sum_{j=1}^{g(i+1)} \dim( B_{\cal M} (w_j,R_j)) &\ge \dim (W_i) + \frac{1}{e^n} \dim (W_i^\perp)  - \bar d(1-\frac{1}{e^n})^{i+1} \\
    &= d - \dim(W_i^\perp) + \frac{1}{e^n} \dim (W_i^\perp)  - \bar d(1-\frac{1}{e^n})^{i+1} \\
    &\ge \bar d(1 -(1-\frac{1}{e^n})^{i+1}) - (1-\frac{1}{e^n})\dim (W_i^\perp).
\end{align*}
By the inductive assumption, we have $\dim (W_i^\perp) \le \bar d\cdot i(1-\frac{1}{e^n})^i + d\cdot i\frac{\de}{2k}+ \eps d$, so 
altogether we have
\begin{align*}
\sum_{j=1}^{g(i+1)} \dim (B_{\cal M} (w_j,R_j)) &\ge \bar d(1 -(1-\frac{1}{e^n})^{i+1}) - \bar d\cdot i(1-\frac{1}{e^n})^{i+1}  -d (i\frac{\de}{2k} +\eps)
\\
&= \bar d(1 -(i+1)(1-\frac{1}{e^n})^{i+1}) -d (i\frac{\de}{2k} +\eps)
\\
&\ge \bar d(1 -(i+1)(1-\frac{1}{e^n})^{i+1}) -d (i\frac{\de}{2k} +\frac{\de}{2k}),
\end{align*}
which is the inductive statement we wanted to show. Hence the lemma follows.
\end{proof}

We have now everything in place to prove Theorem~\ref{t1}.

\begin{proof}[Proof of Theorem~\ref{t1}]
Let us fix $\eps>0$. By Lemma~\ref{lemma-first-red}, we can fix $R\in \N$ and $\eta>0$ such that if $\cal A$ is a $d$-matrix tuple for some $d\in \N$, and    
$W\subset \C^d$ is an $R$-multi-ballspace for $\cal A$ with $\dim(W) \ge (1-\eta)d$, then we can find a commuting  $d$-matrix tuple $\cal B$ such that
$$
    d_\rank(\cal A, \cal B) \le \eps.
$$

However by Lemma~\ref{lemma-final}, we can find $\de>0$ such that if $d\in \N$ and $\cal A$ is a $\sta$-closed $\de$-commuting tuple then there exists an $R$-multi-ballspace $W\subset \C^d$ for $\cal A$ such that 
$$
\dim(W) \ge (1-\eta)d.
$$ 
This finishes the proof.
\end{proof}

\section{Abels' group is not stable with respect to the rank metric}\label{sec-abels}

We finish the article with the following proof.
\begin{proof}[Proof of Theorem~\ref{abels}]

The centre $Z(A_p)$ of $A_p$ is the group of matrices of the form
$$
\begin{pmatrix}  
1 & 0 & 0 & \ast  \\
 & 1 & 0  & 0  \\
 & & 1 & 0 \\
& & & 1 
 \end{pmatrix},
$$
isomorphic to $\Z[\frac{1}{p}]$. Consider the central subgroup $H$ of the elements of the form
$$
\begin{pmatrix}  
1 & 0 & 0 & x  \\
 & 1 & 0  & 0  \\
 & & 1 & 0 \\
& & & 1 
 \end{pmatrix},
$$
where $x\in \Z$. 

Let $\De$ be the quotient group $A_p/H$ and let $\pi\colon A_p \to \De$ be the quotient map. Let $\ga_1,\ldots, \ga_g$ be  generators of $A_p$ and let $P_1,\ldots, P_r$ be noncommutative monomials (possibly with negative exponents) such that
$$
\langle \ga_1,\ldots, \ga_g | P_1(\ga_1,\ldots, \ga_g), \ldots, P_r(\ga_1,\ldots, \ga_g)\rangle,
$$
is a presentation of $A_p$, and let $H$ be such that for all $i$  we have that $P_i$ has length at most $H$.  Let  $F_1,F_2,\ldots\subset \De$ be a sequence of F{\o}lner sets in $\De$. Let $S\subset \De$ be the set of those elements which can be represented as words of length at most $H$ in the elements $\pi(\ga_1),\ldots, \pi(\ga_g)$ and their inverses. For $i\in \Np$ let $\inter(F_i)$ be the subset of those $f\in F_i$ such that for all $s\in S$ we have $sf\in F_i$.

For $i\in \Np$ and $j=1,\ldots,g$ let  $A_i^j$ be a permutation of $F_i$ which is equal to $\pi(\ga_j)$ on $\inter(F_i)$ (there is in general no unique such permutation). In what follows we will think of $A_i^j$ as permutation matrices -- in particular they are unitary matrices.

Since $F_i$ is a F{\o}lner sequence, we have, for any $k\in \{1,\ldots, r\}$ that
$$
\rank(P_k(A_i^1,\ldots, A_i^g)-\Id_{|F_i|}) \xrightarrow[i\to \infty]{} 0,
$$
since the left-hand side is bounded from above by $1-\frac{|\inter F_i|}{|F_i|}$.

By way of contradiction, let us assume that $A_p$ is stable with respect to the rank metric. It follows that we can find $g$ sequences of invertible matrices $B_i^1,\ldots, B_i^g$ with 
$$
    \frac{1}{|F_i|}\dim\big(\im(\widehat{A_i}-\widehat{B_i})\big) \xrightarrow[i\to \infty]{} 0
$$  
and such that $P_i(B_1,\ldots, B_g) = 1$ for all $i=1,\ldots, r$. 
In particular for each $i=1,2,\ldots$ we get a representation $\rho_i\colon A_p \to GL(n_i,\C)$ for suitable $n_i \in \N$. 

Now let $t$ be a generator of $H$. Since $t$ is a central element, we have that each eigenspace of $\rho_i(t)$ is preserved under the action of $A_p$. Let $V_i\subset \C^{n_i}$ be the eigenspace of $\rho_i(t)$ corresponding to the eigenvalue $1$, i.e.~the set of all $v\in \C^{n_i}$ such that $\rho_i(t)(v) = v$.
 Note that $\frac{\dim(V_i)}{n_i}\xrightarrow[i\to\infty]{} 1$

We have representations $\bar \rho_i$ of $A_p/H$ on all the spaces $V_i$. Now let $K\subset Z(A_p)$ be the subgroup of $Z(A_p)$ of elements of the form $\frac{n}{p}$, where $n\in \Z$, and let $\bar K$ be the image of $K$ in $Z(A_p)/H$. Since $\bar K$  is finite, we may assume that $i$ is big enough so that $\rank(\bar \rho_i(\ga)-1) \ge \frac{1}{2}$ for all $\ga\in\bar K \setminus \{e\}$.

But for every element $\eta\in Z(A_p)\setminus H$ there exists $n\in \Np$ such 
that $\eta^n \in K\setminus H$. It follows that for every $\eta\in Z(A_p)\setminus H$ we have that $\rho_i(\eta)$ does not act as the identity on $V_i$. This shows that $\bar\rho_i$ is injective on $Z(A_p)/H$. 

But $\De$ is finitely-generated, and hence $\bar\rho_i(\De)$ is a finitely-generated linear group, which by Malcev's theorem~\cite{mal} shows that $\De$ is residually finite. But the abelian group $Z(A_p)/H\subset \De$ is not residually finite (see~\cite{blt} for a short argument), which is a contradiction. This finishes the proof.
\end{proof}

\end{document}